\pgfplotsset{compat=1.15}
\newtheorem{theorem}{Theorem}[section]
\newtheorem*{theorem*}{Theorem}
\newtheorem{lemma}[theorem]{Lemma}
\newtheorem{proposition}[theorem]{Proposition}
\theoremstyle{remark}
 \newtheorem{remark}[theorem]{Remark}}
\theoremstyle{definition}
 \newtheorem{definition}[theorem]{Definition}
 \newtheorem{example}[theorem]{Example}
\newtheorem{introthm}{Theorem}
\def\Q{\mathbb{Q}}
\def\Z{\mathbb{Z}}
\def\H{\mathcal{H}}
\def\KK{\mathbf{k}}
\newcommand{\ZZ}[0]{\ensuremath{\mathbb{Z}}}
\newcommand{\GA}[0]{\ensuremath{\mathbb{G}_{\mathrm{a}}}}
\newcommand{\GM}[0]{\ensuremath{\mathbb{G}_{\mathrm{m}}}}
\newcommand{\AF}[0]{\ensuremath{\mathbb{A}}}
\newcommand{\QQ}[0]{\ensuremath{\mathbb{Q}}}
\newcommand{\spec}[0]{\ensuremath{\operatorname{Spec}}}
\newcommand{\Aut}[0]{\ensuremath{\operatorname{Aut}}}
\newcommand{\cone}[0]{\ensuremath{\operatorname{cone}}}
\newcommand{\SL}[0]{\ensuremath{\operatorname{SL}}}
\newcommand{\slt}
[0]{\ensuremath{\operatorname{SL}}}
\newcommand{\id}[0]{\ensuremath{\operatorname{id}}}
\newcommand*\bigcdot{\mathpalette\bigcdot@{.5}} \newcommand*\bigcdot@[2]{\mathbin{\vcenter{\hbox{\scalebox{#2}{$\m@th#1\bullet$}}}}} \makeatother
\DeclareMathOperator{\GL}{GL}
\begin{document}

\title[On the characterization of affine toric varieties by their automorphism group]{On the characterization of affine toric varieties \\ by their automorphism group}

\author{Roberto D\'iaz}
\address{Departamento de Matem\'aticas, Facultad de Ciencias, Universidad de Chile, Las Palmeras 3425, \~{N}u\~{n}oa, Santiago, Chile.}%
\email{robertodiaz@uchile.cl}

\author{Alvaro Liendo} %
\address{Instituto de Matem\'atica y F\'\i sica, Universidad de Talca,
 Casilla 721, Talca, Chile.}%
\email{aliendo@inst-mat.utalca.cl}

	\author{Andriy Regeta}
	\address{\noindent Institut f\"{u}r Mathematik, Friedrich-Schiller-Universit\"{a}t Jena, Jena 07737, Germany}
\email{andriyregeta@gmail.com}

\date{\today}

\thanks{{\it 2000 Mathematics Subject
 Classification}: 14R20, 14M25.\\
 \mbox{\hspace{11pt}}{\it Key words}: toric varieties, automorphism group, locally nilpotent derivation.\\
 \mbox{\hspace{11pt}} The first author is supported by the Fondecyt postdoc project 3230406. The second author was partially supported by Fondecyt project 1200502. The third author is supported by DFG, project number 509752046.}

\begin{abstract}
In this paper we show that a normal affine toric variety $X$ different from the algebraic torus is uniquely determined by its automorphism group in the category of affine irreducible, not necessarily normal, algebraic varieties if and only if $X$ is isomorphic to the product of the affine line and another affine toric variety. In the case where $X$ is the algebraic torus $T$, we reach the same conclusion if we restrict the category to only include irreducible varieties of dimension at most the dimension of $T$. There are examples of varieties of dimension one higher than $T$ having the same automorphism group of $T$. Hence, this last result is optimal.
\end{abstract}

\maketitle


\section*{Introduction}

Let $\KK$ be an uncountable algebraically closed field of characteristic zero and let $\GM$ be the multiplicative group of $\KK$. An affine toric variety is an irreducible affine variety $X$ that contains an algebraic torus $T\simeq\GM^n$ as an open Zariski subset such that the action of $T$ on itself extends to an algebraic action of $T$ on $X$~\cite[Definition~1.1.3]{cox2011toric}. This definition does not assume the property of normality on $X$ unlike, for instance, the definitions in \cite{fulton1993introduction, oda1983convex}. Let $X_1$ and $X_2$ be toric varieties with acting tori $T_1$ and $T_2$, respectively. A toric morphism is a regular map $\varphi \colon X_1\to X_2$ such that $\varphi(T_1)\subset T_2$ and the restriction $T_1\to T_2$ is a group homomorphism.

Toric varieties were first introduced by Demazure \cite{demazure1970sous} as a tool to study the structure of the Cremona group. They admit a combinatorial structure that allows us to describe them in a rather simple way. For example, normal toric varieties are described by means of certain combinatorial gadgets called fans and affine toric varieties are described by affine semigroups \cite{cox2011toric}. The description of non-normal non-affine toric varieties is more involved and will not be used in this paper. 

By construction, toric varieties have a rather large automorphism group containing at least the algebraic torus. The automorphism group of a complete toric variety is linear algebraic \cite{demazure1970sous} and in most cases is reduced only to the algebraic torus; see also \cite{liendo2021automorphisms}. On the other hand, the automorphism group of a normal affine toric variety $X$ is never algebraic when the dimension of $X$ is greater than one. Moreover, such automorphism group is always infinite dimensional when $X$ is different from the algebraic torus. In this context, it is natural to ask ourselves whether a normal affine toric variety is uniquely determined by its automorphism group in the category of irreducible varieties. This paper provides a definitive answer to this question in \cref{maintheorem}. Our theorem builds on several previous results that we describe now.

The automorphism group of an affine variety is quite often infinite dimensional, so they do not have the structure of an algebraic group. Nevertheless, they carry the structure of an ind-group. An ind-variety is a set $\mathcal{V}$ endowed with a filtration $V_1\hookrightarrow V_2\hookrightarrow\dots$ such that $\mathcal{V}=\bigcup V_i$, where each $V_i$ is an algebraic variety and the inclusions
$\varphi_i\colon V_i{\hookrightarrow} V_{i+1}$ are closed embeddings. An ind-group is a group object in the category of ind-varieties, see \cite{furter2018geometry} for details. 

It is proved in \cite[Corollary D]{regeta2021characterizing} that a normal affine toric variety not isomorphic to the algebraic torus is determined by its automorphism group in the category of normal affine varieties, see also \cite[Theorem A]{cantat2019families} and \cite[Theorem 1.3 and Theorem 1.4]{LRU23}. For the algebraic torus $T$ such a statement fails: if $C$ is a smooth affine curve that has trivial group of automorphisms and no non-constant invertible regular functions, then $\Aut(T) \simeq \Aut(T \times C)$ as ind-groups, see
\cite[Example 6.17]{LRU23}. However, it is proved in \cite[Theorem~E]{regeta2021characterizing} that the algebraic torus is determined by its automorphism group among normal affine varieties of dimension less than or equal to $\dim T$.

In \cite[Theorem 1]{regeta2022characterizing} and \cite[Lemma 9.1]{regeta2021characterizing2} it is proved that for every complex affine $n$-dimensional toric variety $X$ endowed with a regular non-trivial $\slt_n$-action there are infinitely many affine non-normal toric varieties $Y$ such that $\Aut(X) \simeq \Aut(Y)$ as ind-groups provided that $X$ is different from the affine space. All such varieties $Y$ are also classified in \cite[Theorem 1]{regeta2022characterizing}. This shows that, in general, affine toric varieties are not determined by their automorphism groups in the category of irreducible, not necessarily normal, affine varieties. In addition, in \cite[Theorem 4.5]{DL24} it is shown that for every normal affine toric surface $X$ different from the algebraic torus, the affine space, and $\GM\times \mathbb{A}^1$, there exists a non-normal affine toric surface $Y$ with $\Aut(X)\simeq \Aut(Y)$.

In this paper, we complete the picture by generalizing the last result to higher dimension. In fact, we prove the following theorem.

\begin{introthm}\label{maintheorem}
Let $X$ be a normal affine toric variety, and let $Y$ be an affine irreducible, not necessarily normal, variety such that $\Aut(Y) \simeq \Aut(X)$. 
\begin{enumerate}
 \item \label{main1} If $X$ is the algebraic torus and $\dim Y \le \dim X$, then $Y \simeq X$;
	\item \label{main2} If
 $X$ is isomorphic to $\mathbb{A}^1 \times Z$, where $Z$ is an affine toric variety, then $Y \simeq X$;
 \item \label{main3} If $X$ is not isomorphic to neither the algebraic torus nor $\mathbb{A}^1 \times Z$ as in (2), then there are infinitely many non-normal affine varieties $Y$ such that $\Aut(X) \simeq \Aut(Y)$. Moreover, the groups $\Aut(X)$ and $\Aut(Y)$ are isomorphic as ind-groups.
 \end{enumerate}
\end{introthm}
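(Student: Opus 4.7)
The plan is to address the three parts within a shared framework. Let $\Phi\colon\Aut(X)\to\Aut(Y)$ denote the given isomorphism.

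For parts (1) and (2), the first step is to transfer the torus action from $X$ to $Y$. Since the acting torus $T_X\subset\Aut(X)$ is distinguished --- for instance as a maximal connected commutative reductive ind-subgroup --- the image $T':=\Phi(T_X)$ is an algebraic subtorus of $\Aut(Y)$ of dimension $n=\dim X$. In part (1), the hypothesis $\dim Y\le n$ forces the effective $T'$-action on $Y$ to have a dense orbit, so $Y$ becomes an affine toric (possibly non-normal) variety of dimension $n$. In part (2), the splitting $X\simeq\AF^1\times Z$ produces a canonical $\GA$-subgroup of $\Aut(X)$ arising from translations of the $\AF^1$-factor; its image in $\Aut(Y)$, together with $T'$, exhibits $Y$ as a toric variety with the same combinatorial type of acting group.

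The second step is to promote $Y$ from toric to normal, after which the known classifications in the normal category yield $Y\simeq X$. Denote by $\nu\colon\widetilde Y\to Y$ the normalization and by $\mathfrak c\subset\KK[\widetilde Y]$ the conductor ideal, so that $\Aut(Y)$ embeds in $\Aut(\widetilde Y)$ as the ind-subgroup fixing $\mathfrak c$. Applying \cite[Theorem~E]{regeta2021characterizing} in part (1) and \cite[Corollary~D]{regeta2021characterizing} in part (2) yields $\widetilde Y\simeq X$. It then remains to show $\mathfrak c=\KK[\widetilde Y]$: in (1), the $\GL_n(\ZZ)$-component of $\Aut(T_X)=T_X\rtimes\GL_n(\ZZ)$ acts on the character lattice of $T_X$ in a way that leaves no proper $T_X$-invariant ideal of $\KK[T_X]$ invariant; in (2), the $T'$-conjugates of the distinguished $\GA$-subgroup shift weights in $\KK[\widetilde Y]$ broadly enough to preclude any proper homogeneous invariant ideal.

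Part (3) proceeds by construction. Writing $X=\spec\KK[S]$ for $S=\sigma^\vee\cap M$, the hypothesis that $X$ is neither a torus nor of the form $\AF^1\times Z$ ensures that the Demazure roots of $\Aut(X)$ (which label the $T_X$-homogeneous $\GA$-subgroups) collectively avoid infinitely many weights of $S$. Removing such weights produces subsemigroups $S'\subsetneq S$ whose saturation equals $S$, and the affine varieties $Y:=\spec\KK[S']$ are non-normal toric with $\Aut(Y)\simeq\Aut(X)$ as ind-groups, since every Demazure root descends to $Y$. Varying $S'$ yields infinitely many non-isomorphic examples, generalising \cite[Theorem~1]{regeta2022characterizing} and \cite[Theorem~4.5]{DL24}. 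The principal obstacle is the normality step in parts (1) and (2): transferring torus actions is by now standard, but showing that the conductor ideal is trivial requires exploiting the structural hypotheses in a delicate way, namely the $\GL_n(\ZZ)$-action on the weight lattice in (1), and the combined action of the $\AF^1$-translations and $T'$ on the semigroup of weights in (2).
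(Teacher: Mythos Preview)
Your plan for parts (1) and (2) has a genuine gap at the step where you invoke \cite[Theorem~E]{regeta2021characterizing} and \cite[Corollary~D]{regeta2021characterizing} for the normalization $\widetilde Y$. Those results require $\Aut(\widetilde Y)\simeq\Aut(X)$, but you only know $\Aut(Y)\simeq\Aut(X)$ together with the closed embedding $\Aut(Y)\hookrightarrow\Aut(\widetilde Y)$; the containment can be strict, so no conclusion about $\widetilde Y$ follows. The paper never passes through the normalization in this way. For (2) it instead uses \cref{RvS23Theorem9.1} to obtain, after a twist by a field automorphism of $\KK$, a bijection between the Demazure roots of $X$ and those of $Y$ itself, and then proves purely combinatorially (\cref{Lemma: subset distinguished ray}, \cref{normalization}, \cref{A1timesZisdeterminedbyroots}) that the semigroup $S$ of $Y$ must already equal $\sigma^\vee\cap M$. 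A related weakness is your first step: that $\Phi(T_X)$ is an algebraic subtorus acting with a dense orbit is not a formal consequence of $T_X$ being ``distinguished'' in some ind-group sense, because $\Phi$ is only an abstract group isomorphism; this is precisely the content of \cref{RvS23TheoremC1} and relies on the machinery of \cite{cantat2019families,regeta2021characterizing}.

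For (1) the conductor argument conceals a second gap: to conclude that the image of $\GL_n(\ZZ)$ leaves no proper $T'$-invariant ideal of $\KK[\widetilde Y]$ invariant, you would need to know how $\Phi(\GL_n(\ZZ))$ acts on the character lattice $M_{T'}$, and an abstract group isomorphism gives no control over this. The paper's \cref{main-torus} confronts exactly this point and settles it via Margulis super-rigidity, proving that $\Phi$ must send $-\id_M$ to $-\id_{M_Y}$; since $-\id_{M_Y}$ then preserves the semigroup $S$ of $Y$, that semigroup is a group, hence $S=M_Y$ and $Y$ is the torus. Your sketch for (3) is in the right spirit but differs from the paper's \cref{isomofaut}: rather than removing weights ``avoided by Demazure roots'' (which need not leave a subsemigroup, nor obviously yield the same automorphism group), the paper removes the elements of $S$ with short Hilbert-basis decompositions and proves equality of automorphism groups not by matching roots but by showing, via \cref{fix-point-is-fixed}, that every automorphism of $X_\sigma$ preserves the maximal ideal of the unique torus-fixed point and hence each of its ``length-$l$'' pieces.
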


\subsection*{Acknowledgments}
We would like to thank the anonymous referee of the paper for valuable suggestions. In particular, the current version of \cref{Lemma: subset distinguished ray}, that greatly improved the exposition, was hinted to us by the referee.

\section{Preliminaries} 

\subsection{Affine semigroups}
An affine semigroup $S$ is a finitely generated commutative monoid that admits an embedding into a free abelian group $M$ of rank $n\in \Z_{\geq 0}$ \cite[Page 16]{cox2011toric}. Fix such an embedding $S\hookrightarrow M$. An affine semigroup is called saturated if, for all $k$ in $\Z_{>0}$ and $m$ in $M$, $km \in S$ implies $m \in S$. The saturation of a semigroup $S$ is the smallest saturated semigroup of $M$ that contains $S$. We denote the saturation of $S$ by $\widetilde{S}$. A morphism $\varphi\colon S\to S'$ between affine semigroups $S$ and $S'$ satisfies $\varphi(m+m')={\varphi}(m)+{\varphi}(m')$ and ${\varphi}(0)=0$.

The category of affine toric varieties with toric morphisms is dual with the category of affine semigroups with semigroup morphisms. Moreover, the subcategory of normal affine toric varieties is dual with the category of saturated affine semigroups. The correspondence between the objects is obtained via the notion of semigroup algebra.

Letting $S$ be an affine semigroup, we let $\KK[S]$ be the semigroup algebra of $S$ defined as
$$\KK[S]=\bigoplus_{m\in S} \KK\chi^m\quad\mbox{where}\quad \chi^m\cdot\chi^{m'}=\chi^{m+m'}\mbox{ and } \chi^0=1\,.$$ 
Now for every affine semigroup $S$, we obtain an affine toric variety $X_S$ by setting $X_S=\spec \KK[S]$. In the case where $S=M$ is a free abelian group of rank $n$, this construction yields $X_S=T$, the algebraic torus of dimension $n$. Conversely, given an affine toric variety $X$ we obtain the corresponding semigroup $S$ by considering the embedding $\spec\KK[M]=T\hookrightarrow X$. The semigroup corresponding to $X$ is obtained as
$$S=\{m\in M\mid \chi^m\colon T\to \GM \mbox{ extends to a regular function on } X\}\,.$$
If $X_S$ is the non-normal affine toric variety associated with the non-saturated affine semigroup $S$, the normalization of $X_S$ is the toric variety $X_{\widetilde{S}}$ of the saturation $\widetilde{S}$ of $S$.

\subsection{Demazure roots} 
\label{section-demazure}

The combinatorial structure of the affine semigroup $S$ allows us to study certain regular actions of the additive group $\GA:=(\mathbf{k},+)$ on $X_S$ via the notion of Demazure roots introduced in \cite{demazure1970sous}, see also \cite{L10,ArLi12}. We present now a generalization of this notion adapted to non-saturated affine semigroups following \cite[Definition 3.1]{DL24} and we refer to the original paper for the proofs.

Let $S$ be an affine semigroup, a submonoid $F\subset S$ is said to be a face if $m+m'\in F$ with $m,m'\in S$ implies $m,m'\in F$. We say that $S$ is pointed if $\{0\}$ is a face of $S$. A ray of $S$ is a face $F\simeq \Z_{\geq 0}$. Every ray $\rho\subset S$ has a unique generator as semigroup. As is customary in toric geometry, we also denote this generator by $\rho$ and we freely identify the ray and the ray generator. The set of rays of $S$ is denoted by $S(1)$. Remark that we do not require that $\rho\in S(1)$ is a primitive element.

Let also $S$ be embedded in $M\simeq \Z^n$. We say that $M$ is minimal if $S$ cannot be embedded in any subgroup of $M$. In the sequel, we always assume that an affine semigroup is embedded in a minimal free abelian group $M$. We also let $N:=\operatorname{Hom}{(M,\Z)}\simeq \Z^n$. This yields a bilinear map $\langle\ ,\ \rangle \colon M\times N\rightarrow \Z$. We define the dual semigroup $S^*=\{u\in N\mid \langle m,u\rangle\geq 0, \forall m\in S\}$. Remark that by its definition, the dual semigroup $S^*$ is always saturated. A facet $F$ of $S$ is a face obtained as 
$$F=\rho^\bot\cap S=\{m\in S\mid \langle m,\rho\rangle=0\},\quad\mbox{for some}\quad  \rho\in S^*(1)\,.$$

We now come to the definition of the Demazure roots of an affine semigroup.

\begin{definition}[{\cite[Definition~3.1]{DL24}}]\label{definition-root} %
Let $S$ be an affine semigroup embedded minimally in a free abelian group $M$ of finite rank. An element $\alpha\in M$ is called a Demazure root of $S$ if 
\begin{enumerate}[$(i)$]
 \item There exists $\rho\in S^*(1)$ such that $\langle \alpha, \rho \rangle=-1$, and
 \item The element $m+\alpha$ belongs to $S$ for all $m\in S$ such that $\langle m,\rho \rangle >0$.
\end{enumerate}
 We say that $\rho$ is the distinguished ray of $\alpha$. We denote the set of Demazure roots of $S$ by $\mathcal{R}(S)$ and the set of Demazure roots of $S$ with the distinguished ray $\rho$ by $\mathcal{R}_{\rho}(S)$. 
\end{definition}

Remark that, even if a ray generator $\rho\in S^*(1)$ is not required to be primitive by the definition of ray, only a ray with primitive generator can fulfil condition $(i)$ in the above definition. 

Recall that for any variety $Z$ endowed with a $T$-action, a root subgroup $U\subset \Aut(Z)$ is a subgroup isomorphic to the additive group $\GA$ that is normalized by $T$. Let $X_S$ be an affine toric variety. The interest on Demazure roots comes from the fact that they are in one-to-one correspondence with root subgroups of $\Aut(X_S)$.  In the case of complete smooth toric varieties, this was shown in \cite{demazure1970sous}, see also \cite{liendo2021automorphisms}. This was later revisited for normal affine toric varieties in \cite{L10}. Finally, for non-normal affine toric varieties, this was proven in \cite{DL24}.

The correspondence is obtained as follows. Let $S$ be an affine semigroup and let $X_S$ be the corresponding toric variety. We also denote by $T$ the image of the torus inside $\Aut(X_S)$. In the affine case, root subgroups are classified by non-trivial locally nilpotent derivations of $\KK[S]$ that are homogeneous with respect to the $M$-grading, see for instance \cite[Lemma~2]{L11}. Recall that a derivation $\partial\colon B\to B$ of a $\KK$-algebra $B$ is called locally nilpotent if for every $b\in B$ there exists $i\in \ZZ_{\geq 0}$ such that $\partial^i(b)=0$, where $\partial^i$ stands for the $i$-th iterate of $\partial$.

On the other hand, it is proven in \cite[Theorem 3.11]{DL24} that homogeneous locally nilpotent derivations on $\KK[S]$ are themselves classified by the Demazure roots of $S$. Indeed, letting $\alpha\in M$ be a Demazure root of $S$ the map 
$$\partial_\alpha\colon \KK[S]\to \KK[S] \mbox{ given by } \chi^m\mapsto \langle m,\rho\rangle\cdot \chi^{m+\alpha}\,,$$ where $\rho$ is the distinguished ray of $\alpha$ defines a non-trivial homogeneous locally nilpotent derivation of $\KK[S]$ and every non-trivial homogeneous locally nilpotent derivation of $\KK[S]$ equals $\lambda\partial_\alpha$ for some constant $\lambda\in \KK^*$ and some Demazure root $\alpha\in \mathcal{R}(S)$. Under this identification, the Demazure root $\alpha$ is also the degree of $\partial$ as homogeneous linear map and the weight of the root subgroup, see \cref{section-RvS23} for the definition of the weight of a root subgroup.

The following proposition states the well known fact that locally nilpotent derivations lift to the normalization.

\begin{proposition}[{\cite[Proposition 3.6]{DL24}}]\label{proposition: 3.6 DL24} Let $S$ be a non-necessarily saturated affine semigroup and let $\widetilde{S}$ be its saturation. Then every Demazure root of $S$ is also a Demazure root of $\widetilde{S}$.
\end{proposition}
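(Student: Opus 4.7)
The plan is to verify directly that any Demazure root $\alpha\in \mathcal{R}(S)$ with distinguished ray $\rho\in S^*(1)$ satisfies both conditions of \cref{definition-root} relative to $\widetilde{S}$, using the same pair $(\alpha,\rho)$. Note first that $\widetilde{S}$ is also minimally embedded in $M$, since any free abelian subgroup containing $\widetilde{S}$ also contains $S$. For condition $(i)$, it suffices to establish $S^*=\widetilde{S}^*$. The inclusion $\widetilde{S}^*\subset S^*$ is immediate from $S\subset \widetilde{S}$; for the reverse, if $u\in S^*$ and $m\in \widetilde{S}$, then $km\in S$ for some $k\in \Z_{>0}$ and $\langle m,u\rangle=\tfrac{1}{k}\langle km,u\rangle\geq 0$. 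Hence $\rho$ remains a ray of the dual and the relation $\langle \alpha,\rho\rangle=-1$ carries over verbatim.

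Condition $(ii)$ is the substantive part. Given $m\in\widetilde{S}$ with $\langle m,\rho\rangle>0$, the integrality of the pairing forces $\langle m,\rho\rangle\geq 1$. Pick $k\in\Z_{>0}$ with $km\in S$; then $\langle km,\rho\rangle=k\langle m,\rho\rangle\geq k$, and I would apply the Demazure root condition for $S$ iteratively to produce $km+j\alpha\in S$ for $j=0,1,\ldots,k\langle m,\rho\rangle$, since at step $j$ the pairing with $\rho$ equals $k\langle m,\rho\rangle-j$, which is strictly positive as long as $j<k\langle m,\rho\rangle$. Taking $j=k$, which is permitted because $k\leq k\langle m,\rho\rangle$, yields $k(m+\alpha)=km+k\alpha\in S$, so $m+\alpha\in\widetilde{S}$ by definition of the saturation.

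The main obstacle is keeping track of the pairing during this iteration so that each application of condition $(ii)$ for $S$ is legal. In particular, the integrality $\langle m,\rho\rangle\in\Z$ is essential: without it the lower bound $\langle m,\rho\rangle\geq 1$ would fail and the iteration could terminate before reaching $k$ copies of $\alpha$. Once this is in place, the remaining steps are essentially bookkeeping, and the proof is complete.
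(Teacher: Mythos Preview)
The paper does not give its own proof of this proposition; it is quoted verbatim from \cite{DL24} and only accompanied by the remark that it expresses ``the well known fact that locally nilpotent derivations lift to the normalization.'' So there is no in-paper argument to compare against. That said, your proof is correct: the identification $S^*=\widetilde{S}^*$ is immediate, and your iteration $km+j\alpha\in S$ for $0\le j\le k\langle m,\rho\rangle$ is valid because at each step the pairing with $\rho$ equals $k\langle m,\rho\rangle-j>0$, and the integrality bound $\langle m,\rho\rangle\ge 1$ guarantees $k\le k\langle m,\rho\rangle$, so $k(m+\alpha)\in S$ and hence $m+\alpha\in\widetilde{S}$.

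It is worth noting that the paper's phrasing points to the classical, non-combinatorial route: the Demazure root $\alpha$ corresponds to a homogeneous locally nilpotent derivation $\partial_\alpha$ on $\KK[S]$, and by Seidenberg's theorem any derivation of a domain extends to its integral closure in the field of fractions; local nilpotency is preserved, and the extended derivation on $\KK[\widetilde{S}]$ is again homogeneous of degree $\alpha$, so $\alpha\in\mathcal{R}(\widetilde{S})$. Your argument bypasses this machinery entirely and stays within the semigroup definition, which is arguably cleaner in this context and makes the role of the distinguished ray completely explicit.
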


\medskip

In the case where the semigroup $S$ is saturated, all the correspondences above can be translated into convex geometry in the following way \cite[Sections 1.1 and 1.2]{cox2011toric}. The bilinear pairing $\langle\ ,\ \rangle \colon M\times N\to \ZZ$ extends naturally to $\langle\ ,\ \rangle \colon M_\mathbb{Q}\times N_\mathbb{Q}\to \Q$ where $M_\mathbb{Q}:=M\otimes_{\mathbb{Z}} \Q \simeq \Q^n$ and $N_\mathbb{Q}:=N\otimes_{\mathbb{Z}} \Q \simeq \Q^n$. Now, let $\sigma^\vee$ be the cone generated by $S$ inside $M_\Q$. The saturation condition ensures that $S=\sigma^\vee\cap M$ and so we can also recover $S$ from $\sigma^\vee$. Moreover, $\sigma^\vee$ is also uniquely determined by the dual cone $\sigma=\{u\in N_\Q\mid \langle m,u\rangle\geq 0, \forall m\in \sigma^\vee\}$. This gadget allows us to obtain a correspondence between normal toric varieties and strongly convex polyhedral cones in $M_\Q$ and this correspondence can be extended into an equivalence of categories. The normal affine toric variety $X_S$ associated to a saturated semigroup $S$ is also denoted by $X_{\sigma}$ where $\sigma$ is the strongly convex rational cone previously described. Furthermore, the set of rays $S^*(1)$ of $S^*$ is also denoted by $\sigma(1)$ and in this case it coincide with the geometrical notion of ray of the cone $\sigma$. In this new setting, we keep our convention of denoting a ray $\rho\in \sigma(1)$ by its unique primitive vector.

In the case of a saturated affine semigroup $S=\sigma^\vee\cap M$ for some strongly convex polyhedral cone $\sigma\subset N_\QQ$, we can give an equivalent definition of the set of Demazure roots of $S$.

\begin{definition}[{\cite[Definition~1.5]{ArLi12}}]\label{definition-root-normal} %
Let $S$ be a saturated affine semigroup embedded minimally in a free abelian group $M$ of finite rank. An element $\alpha\in M$ is called a Demazure root of $S$ if 
\begin{enumerate}[$(i)$]
 \item There exists $\rho_\alpha\in S^*(1)$ such that $\langle \alpha, \rho_e \rangle=-1$, and
 \item $\langle \alpha, \rho \rangle\geq 0$ for all $\rho\in S^*(1)\setminus\{\rho_\alpha\}$.
\end{enumerate}
\end{definition}
The equivalence between both definitions in the case of a saturated affine semigroup follows from the fact that both of them are characterizations of the degrees of locally nilpotent derivations on $\KK[S]$ by \cite[Theorem~3.11]{DL24} and \cite[Theorem~1.6]{ArLi12}, respectively.

\subsection{Characterization of normal affine toric varieties} \label{section-RvS23}

In the proof of \cref{maintheorem}, we will apply several results from \cite{regeta2021characterizing} that were proven in the more general setting of normal spherical varieties. Let $G$ be a linear reductive algebraic group and let $B\subset G$ be a Borel subgroup.  Recall that a variety $X$ endowed with a regular $G$-action with the property that $B$ acts on $X$ with an open orbit is called $G$-spherical. Toric varieties are, in particular spherical varieties with respect to $G=B=T$. In this section, we recall the required results from \cite{regeta2021characterizing} specialized for the case of toric varieties. Recall that $T$ stands for the algebraic torus $\GM^n$.

Let $X$ be a closed subvariety of some affine space $\mathbb{A}^n$. A priori, $X$ is defined over the base field $\KK$, but in this section we assume further that  $X$ is defined over the subfield of rational numbers $\QQ$, i.e., $X$
is the zero set in $\mathbb{A}^n$ of a finite number of polynomials with coefficients in $\QQ$. Let now $\tau$ be a field
automorphism of $\KK$. Then, the $\QQ$-automorphism 
\begin{align} \label{Q-aut}
  \KK[x_1,\ldots,x_{n}]\to \KK[x_1,\ldots,x_{n}]\quad\mbox{given by}\quad \sum_{\alpha\in \ZZ_{\geq0}^n} a_\alpha x^\alpha \mapsto \sum_{\alpha\in \ZZ_{\geq0}^n} \tau(a_\alpha)x^\alpha\,,
\end{align}
where $x^\alpha$ stands for the usual multi-index notation $x^\alpha=x_1^{\alpha_1}\cdots x_n^{\alpha_n}$, induces the $\QQ$-automorphism $\mathbb{A}^n \to \mathbb{A}^n$ given by $(a_1,\dots,a_n) \mapsto (\tau(a_1),\dots,\tau(a_n))$ that sends $X$ to itself. We have then a $\QQ$-automorphism
\[
\tau_X \colon X \to X\,.
\]
It is indeed possible to define $\tau_X$ directly in terms of the algebra $\KK[X]$ of regular functions of $X$ proving that $\tau_X$ does not depend on the particular embedding of $X$ into $\mathbb{A}^n$. 

Recall that $\Aut(X)$ stands for the group of $\KK$-automorphisms of $X$. Letting $\psi\in \Aut(X)$, we have that $\tau_X \circ \psi \circ \tau_X^{-1}$ is again a $\KK$-automorphism of $X$ since $(\tau^*_X)^{-1} \circ \psi^* \circ \tau_X^*$ restricts to the identity in the base field $\KK$ by~\eqref{Q-aut}. Hence, if $\varphi \colon \Aut(X) \to
\Aut(Y)$ is a group isomorphism, then the map
\[
\Aut(X) \to \Aut(Y) , \; \psi \mapsto \varphi(\tau_X 
\circ \psi \circ \tau_X^{-1})
\]
is again a group isomorphism.

Every toric variety is, up to an isomorphism, defined over $\QQ$. In particular, the algebraic torus $T$ is defined over $\QQ$ and moreover, the action of $T$ on $X$ is also defined over $\QQ$. Hence, we have the following remark arrising from the considerations above.

\begin{remark}[{\cite[Remark 7.2]{regeta2021characterizing}}]
Let $X$ be an affine toric variety with acting torus $T$. We may and will assume that $X$, $T$ and the $T$-action $\rho\colon T\times X\to X$ are defined over $\QQ$. Let $\tau$ be a field automorphism of $\KK$. Then, the map
\[
\rho^{\tau} \colon T \times X \xrightarrow{\tau_T\times \tau_X}
T \times  X 
\xrightarrow{\rho} X \xrightarrow{\tau_X^{-1}} X
\]
is also a faithful algebraic $T$-action on $X$. Furthermore, we have the following commutative diagram of
groups, where the horizontal arrows are isomorphisms of abstract groups.
\[
	\xymatrix@=20pt{
		\Aut(X) \ar[rrr]^-{\psi \mapsto \tau_X \circ \psi \circ \tau_X^{-1}} 
		&&& \Aut(X) \\
		T \ar[rrr]^-{\tau_H} \ar[u] &&& T \ar[u]
	} 
	\]
\end{remark}

 The next statement follows from
\cite[Theorem C(1)]{regeta2021characterizing} and
\cite[Proposition 8.6]{regeta2021characterizing}.

\begin{theorem}\label{RvS23TheoremC1}Assume that 
$\mathbf{k}$ is uncountable, let $X,Y$ be irreducible affine varieties and let $\varphi\colon\Aut(X)\to\Aut(Y)$ be a group isomorphism. If $X$ is toric with acting torus $T$ that is not isomorphic to $T$, then the image $\varphi(T)$ is an algebraic subtorus of $\Aut(Y)$ of dimension $\dim T$ acting on $Y$ with a dense open orbit.
\end{theorem}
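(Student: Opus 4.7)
The plan is to derive the statement from the two cited results in \cite{regeta2021characterizing}, namely Theorem C(1) and Proposition 8.6, by verifying that the toric setting meets their hypotheses and combining their conclusions. The preceding subsection has already set up the key abstract machinery: using a field automorphism $\tau$ of $\KK$, one can twist the group isomorphism $\varphi$ and compare automorphism groups via the induced commutative diagram. This furnishes the $\mathrm{Aut}(\KK/\QQ)$-equivariance needed to transport algebraic structure from $\Aut(X)$ to $\Aut(Y)$.

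First, I would apply \cite[Theorem C(1)]{regeta2021characterizing} to recover $\varphi(T)$ as an algebraic subtorus of $\Aut(Y)$. The hypothesis that $X$ is toric but not isomorphic to $T$ is essential: it guarantees, via \cref{definition-root} and \cref{section-demazure}, the existence of at least one Demazure root and hence of a non-trivial root subgroup in $\Aut(X)$. This root subgroup is needed to pin down $T$ group-theoretically inside $\Aut(X)$—roughly, as a maximal torus whose conjugation action on a suitable family of unipotent one-parameter subgroups is faithful and realized by rational characters. Applying the theorem to our $\varphi$ then yields that $\varphi(T)$ is an algebraic subtorus of $\Aut(Y)$, and a dimension count (encoded in that same abstract characterization) gives $\dim \varphi(T)=\dim T$.

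Second, having established that $\varphi(T)\subset \Aut(Y)$ is an algebraic subtorus of dimension $\dim T$ acting regularly on the irreducible affine variety $Y$, I would invoke \cite[Proposition~8.6]{regeta2021characterizing} to conclude that this action has a dense open orbit in $Y$. The idea there is that, since $T$ acts on $X$ with an open orbit (namely $T$ itself) and the group isomorphism respects the commutation relations between $T$ and the root subgroups, the dimension of a generic $\varphi(T)$-orbit in $Y$ cannot be strictly smaller than $\dim T$, and by the dimension equality just established it must be exactly $\dim T = \dim \varphi(T)$, forcing the orbit to be open.

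The genuine technical difficulty—namely the abstract group-theoretic reconstruction of the acting torus and of its generic orbit dimension from the isomorphism type of $\Aut(X)$—is precisely what is handled by the two quoted results in the more general framework of normal spherical varieties; in our toric situation this is the main obstacle but it is already done. Thus, modulo checking that toric varieties fall into the spherical setting considered in \cite{regeta2021characterizing} (which is immediate, taking $G=B=T$), the proof reduces to the two-step invocation described above.
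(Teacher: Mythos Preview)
Your proposal is correct and matches the paper's approach exactly: the theorem is stated as a direct consequence of \cite[Theorem~C(1)]{regeta2021characterizing} and \cite[Proposition~8.6]{regeta2021characterizing}, and the paper then only sketches the ideas behind those cited results rather than giving an independent proof. Your two-step invocation (Theorem~C(1) for the algebraic torus structure and dimension, Proposition~8.6 for the dense orbit) and your explanation of why $X\not\simeq T$ is needed (existence of root subgroups) are in line with the paper's outline; the only caveat is that the $\Aut(\KK/\QQ)$-twisting you mention is really used later for \cref{RvS23Theorem9.1}, not for this statement.
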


We outline here very briefly the main idea of the proof of \cref{RvS23TheoremC1}. The crucial result that is used in the proof of \cref{RvS23TheoremC1} is \cite[Theorem B and Remark 1.1]{cantat2019families} which roughly states that a closed commutative connected subgroup of $\Aut(X)$, where $X$ is an affine variety, is nested, i.e., is the union of algebraic subgroups. This result together with  the fact that the centralizer of a maximal commutative unipotent subgroup of $\Aut(X)$ coincides with its centralizer \cite[Theorem A]{regeta2021characterizing} are the main ingredients to prove the following proposition. Recall that an element $u \in \Aut(X)$ is unipotent if the closure of the subgroup of $\Aut(X)$ generated by $u$ is isomorphic to the additive group $\mathbb{G}_a$. 

\begin{proposition}[{\cite[Corollary B]{regeta2021characterizing}}]\label{RvS23CorollaryB}
Assume $\KK$ is uncountable. Let $X, Y$ be irreducible
affine varieties and let $\varphi \colon \Aut(X) \to \Aut(Y)$ be a group isomorphism. Then $\varphi$ maps unipotent elements to unipotent elements.
\end{proposition}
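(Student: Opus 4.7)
The overall plan is to exhibit a purely group-theoretic characterization of the set of unipotent elements in $\Aut(X)$, so that it is automatically preserved by any abstract group isomorphism $\varphi$. Given the definition used in the paper, $u$ is unipotent precisely when $\overline{\langle u \rangle} \simeq \GA$; this closure is in particular a connected commutative algebraic subgroup, every element of which is divisible and torsion-free. The characterization I would aim to establish reads: $u$ is unipotent if and only if $u$ lies in a maximal commutative subgroup $U \subseteq \Aut(X)$ all of whose non-identity elements are divisible and torsion-free, and which satisfies the self-centralizer condition $C_{\Aut(X)}(U) = U$.

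The proof then splits naturally into the two implications. For the easy direction, if $u$ is unipotent, I would place it inside a maximal commutative unipotent subgroup $U$, obtained by a Zorn's lemma argument inside the union of one-parameter subgroups commuting with $u$. Theorem A of \cite{regeta2021characterizing}, cited just before the statement, then ensures that $U$ satisfies the self-centralizer property. For the converse, I would use Theorem B of \cite{cantat2019families} (nestedness of closed commutative connected subgroups of $\Aut(X)$): given a maximal abelian subgroup $A$ satisfying the above abstract conditions, its ind-topological closure is commutative and connected, hence nested, hence a union of algebraic subgroups; the divisibility and torsion-freeness hypotheses force each algebraic piece to be unipotent (excluding semisimple factors which contain roots of unity), and maximality identifies this closure with a maximal commutative unipotent subgroup. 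Any individual element of $A$ is then unipotent in the sense of the paper.

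With such a characterization in place, the proposition would follow immediately: the hypothesis on $u$ is purely group-theoretic, so it is transported by $\varphi$ from $\Aut(X)$ to $\Aut(Y)$, and applying the converse characterization in $\Aut(Y)$ yields that $\varphi(u)$ is unipotent.

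The main difficulty, and the reason the two deep inputs cited above are indispensable, is that $\varphi$ is a priori only an isomorphism of abstract groups; in particular it need not be continuous with respect to the ind-topology, so there is no direct way to transport $\overline{\langle u \rangle}$ to $\overline{\langle \varphi(u) \rangle}$. A related secondary point is to rule out that $\varphi(u)$ might be a subtle mixed element with a non-trivial semisimple part that happens to be divisible and torsion-free; this is controlled by combining the nestedness theorem with the self-centralizer condition, which together force the image of a maximal commutative unipotent subgroup to again be commutative unipotent.
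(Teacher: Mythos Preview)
The paper does not give its own proof of this proposition; it is quoted from \cite{regeta2021characterizing}, and the surrounding text only names the two essential inputs: the nestedness theorem \cite[Theorem~B]{cantat2019families} for closed connected commutative subgroups, and the self-centralizer property of maximal commutative unipotent subgroups \cite[Theorem~A]{regeta2021characterizing}. Your proposal is built on exactly these two inputs in the intended way, so your strategy agrees with what the paper indicates.

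One point in your outline deserves more care. In the converse direction you pass from the abstract maximal abelian subgroup $A$ to its closure and then invoke nestedness, which applies only to \emph{connected} closed commutative subgroups. You do get closedness for free from the self-centralizer condition $A=C_{\Aut(X)}(A)$, since centralizers are closed, but you have not explained why $A$ (or its closure) is connected. Divisibility and torsion-freeness alone do not obviously force this in an ind-group; the actual argument in \cite{regeta2021characterizing} handles this by working with the connected component $A^\circ$ and using that $A/A^\circ$ is countable, together with the uncountability of $\KK$, to rule out non-unipotent pieces. Apart from this gap, your plan is sound and matches the approach the paper points to.
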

 
 Now, under conditions of \cref{RvS23TheoremC1} one can show that $T\subset \Aut(X)$ coincides with its centralizer (see \cite[Lemma 2.10]{kraft2021affine}) and hence one can show that $\varphi(T) \subset \Aut(Y)$ is a closed commutative subgroup. By \cite[Theorem B]{cantat2019families}, the connected component $\varphi(T)^\circ$ of $\varphi(T)$ is a union of connected algebraic subgroups of $\Aut(Y)$. But by \cref{RvS23CorollaryB}, the group $\varphi(T)^\circ$ does not contain unipotent elements which implies that $\varphi(T)^\circ$ is an algebraic subtorus of $\Aut(Y)$ of dimension less than or equal to $\dim Y$. 

Now assume $U \subset \Aut(X)$ is a root subgroup with respect to $T$. Using the relation between $T$ and $U$ in $\Aut(X)$ and the geometry of $\Aut(Y)$  the authors showed in \cite{regeta2021characterizing}
that $\varphi(U) \subset \Aut(Y)$ is a closed subgroup.
Moreover, by \cref{RvS23CorollaryB}, the group $\varphi(U)$ consists of unipotent elements and hence is connected. Applying the fact that $\varphi(T)^\circ \subset \varphi(T)$ is a subgroup of countable index and since $\varphi(T)$ acts on $\varphi(U)$ via conjugations with two orbits: the trivial orbit $\{ \id \}$ and $\varphi(U) \setminus  \{ \id \}$, one can show that the algebraic subtorus $\varphi(T)^\circ \subset \Aut(Y)$ acts on $\varphi(U)$ with countably many orbits. 
Hence, due to uncountability of $\KK$ one can show that
$\dim \varphi(U) \le \dim \varphi(T)^\circ$.  

Studying the relations between root subgroups of $\Aut(X)$  and $T \subset \Aut(X)$, the authors of \cite{regeta2021characterizing}  conclude that $\dim \varphi(U) =1$, i.e., $\varphi(U)\simeq \mathbb{G}_a$. Moreover, they also obtain that $\varphi(T) = \varphi(T)^\circ$ is an algebraic subtorus of $\Aut(Y)$ of dimension $\dim T$. This shows that $\varphi$ maps a maximal subtorus of $\Aut(X)$ to a maximal subtorus of $\Aut(Y)$ and root subgroups of $\Aut(X)$ with respect to $T$ to root subgroups of $\Aut(Y)$ with respect to $\varphi(T)$. The final step in the proof of \cref{RvS23TheoremC1} is to show that
$\varphi(T)$ acts on $Y$ with a dense orbit. This step is rather involved. We refer the reader to \cite[Section~8]{regeta2021characterizing} for the proof.

\medskip

Assume now $X$ is an affine variety endowed with a faithful and regular action of an algebraic torus $T=\spec \KK[M]$. A character of $T$ is a morphism $\chi\colon T\rightarrow \GM$ that is also a group homomorphism. Identifying $\GM$ with $\KK^*$, it is a straightforward verification that the set of all characters of $T$ forms a group isomorphic to $M$ given by $\{\chi^m\mid m\in M\}$. By abuse of notation, we denote the character group of $T$ also by $M$. In case the torus is not clear from the context, we denote it by $M_T$. 

Let now $U\simeq \GA$ be a root subgroup of $\Aut(X)$ with respect to $T$. Since $U$ is normalized by $T$, we can define a homomorphism
$$\theta\colon T\to \Aut(U)\quad\mbox{given by} \quad \gamma \longmapsto \theta_\gamma\colon U\to U \quad\mbox{with} \quad \theta_\gamma(s)= \gamma s \gamma^{-1}\,,$$
where $\Aut(U)$ is the set of algebraic group automorphisms of $U$. Since $U\simeq \GA$, we have $\Aut(U)\simeq \GM$ and so $\theta$ defines a character $\chi^m\colon T\to \GM$, for some $m\in M_T$. We define this character as the weight of the root subgroups $U$. Finally, we let $\mathcal{R}_T(X)$ be the subset of $M_T$ given by characters that appear as the weights of root subgroups of $\Aut(X)$, i.e.,
\[\mathcal{R}_T(X)=
\{ m\in M_T \mid \text{ there exists a root subgroup of  $\Aut(X)$ of weight } \chi^m \}. 
\]
In the case where $X=X_S$ is a toric variety with semigroup $S$ minimally embedded in a free abelian group $M$, we have $M_T=M$ and $\mathcal{R}_T(X)=\mathcal{R}(S)$ by \cref{section-demazure}. We are ready to state the main result from \cite{regeta2021characterizing} that we will apply in the proof of \cref{maintheorem}.

\begin{theorem}[{\cite[Theorem 9.1 (3)]{regeta2021characterizing}}]\label{RvS23Theorem9.1} Assume that $\mathbf{k}$ is uncountable, let $X,Y$ be irreducible affine varieties, let $\varphi\colon \Aut(X)\to\Aut(Y)$ be a group isomorphism.  If $X$ is a normal toric variety with acting torus $T$ that is not isomorphic to $T$, then there exists a field automorphism $\tau$ of $\mathbf{k}$ such that 
$$\nu\colon M_T\to M_{\varphi(T)}\quad\mbox{given by}\quad \chi\mapsto \chi\circ \left(\varphi^{-1}\right)|_{\varphi(T)}\circ \tau_{\varphi(T)}$$
is a well defined isomorphism and under this isomorphism, the image of $\mathcal{R}_T(X)$ is $\mathcal{R}_{\varphi(T)}(Y)$.
\end{theorem}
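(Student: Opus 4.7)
The plan is to first identify $\varphi(T)$ as an algebraic torus via \cref{RvS23TheoremC1}, then to produce the field automorphism $\tau$ by a rigidity argument for abstract isomorphisms of algebraic tori, and finally to match the weights of corresponding root subgroups on both sides.

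By \cref{RvS23TheoremC1}, $\varphi(T) \subset \Aut(Y)$ is an algebraic subtorus of dimension $\dim T$ acting on $Y$ with a dense open orbit. In particular, $\varphi|_T \colon T \to \varphi(T)$ is an abstract group isomorphism between two algebraic tori of equal dimension over the uncountable algebraically closed field $\mathbf{k}$. The key intermediate input is a rigidity principle: any such abstract group isomorphism factors as an algebraic group isomorphism composed with the action of a field automorphism of $\mathbf{k}$. Applied to $\varphi|_T$, this yields a field automorphism $\tau$ of $\mathbf{k}$ and an algebraic isomorphism $\psi \colon T \to \varphi(T)$ with $\varphi|_T = \psi \circ \tau_T$. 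Since $\psi$ is defined over $\mathbb{Q}$ at the level of character lattices, it commutes with field-automorphism actions, i.e.\ $\psi \circ \tau_T = \tau_{\varphi(T)} \circ \psi$, and therefore
$$\mu := \varphi^{-1}|_{\varphi(T)} \circ \tau_{\varphi(T)} = \tau_T^{-1} \circ \psi^{-1} \circ \tau_{\varphi(T)} = \psi^{-1}.$$
Thus $\mu$ is an algebraic group homomorphism, and pulling back characters along $\mu$ yields a well-defined group isomorphism $\nu \colon \chi \mapsto \chi \circ \mu$ from $M_T$ onto $M_{\varphi(T)}$.

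For the root-data matching, recall from the exposition following \cref{RvS23CorollaryB} that $\varphi$ maps root subgroups of $\Aut(X)$ with respect to $T$ bijectively onto root subgroups of $\Aut(Y)$ with respect to $\varphi(T)$. Fix such a root subgroup $U \subset \Aut(X)$ of weight $\chi^m$ and algebraic parametrizations $u \colon \GA \to U$, $v \colon \GA \to \varphi(U)$. Transporting the conjugation relation $t\,u(c)\,t^{-1} = u(\chi^m(t)\,c)$ through $\varphi$ and writing $f = v^{-1} \circ \varphi \circ u$ yields
$$f(\chi^m(t)\,c) = \chi^{m'}(\varphi(t))\,f(c) \qquad \text{for all } t \in T,\ c \in \mathbf{k},$$
where $\chi^{m'}$ is the weight of $\varphi(U)$. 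Additivity of $f$ combined with the fact that $\chi^m$ surjects onto $\mathbf{k}^*$ forces $f$, after a rescaling of $v$ to normalize $f(1)=1$, to be a field automorphism $\sigma$ of $\mathbf{k}$. Substituting $\varphi = \psi \circ \tau_T$ and using $\chi^{m''} \circ \tau_T = \tau \circ \chi^{m''}$ for the algebraic character $\chi^{m''} := \chi^{m'} \circ \psi$, one rewrites the specialization at $c = 1$ as $\tau(\chi^{m''}(t)) = \sigma(\chi^m(t))$ for all $t \in T$. Algebraicity of $\chi^{m''}$ then forces $\sigma = \tau$ and $\chi^{m''} = \chi^m$, so that $\chi^{m'} = \chi^m \circ \psi^{-1} = \nu(\chi^m)$. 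The reverse inclusion follows by the symmetric argument applied to $\varphi^{-1}$.

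The main technical obstacle is the rigidity principle invoked in the second step, together with its coherent application across $T$ and all root subgroups. The rigidity itself uses uncountability of $\mathbf{k}$ in an essential way, since a countable field would admit pathological abstract isomorphisms of tori lying outside the semi-algebraic class. The coherence — that a single field automorphism $\tau$ simultaneously witnesses the rigidity for $\varphi|_T$ and for every root-subgroup identification $\varphi|_U$ — is not automatic when rigidity is applied separately to each subgroup, but is forced by the algebraicity of the induced character $\chi^{m'} \circ \psi$ in the weight-matching computation above.
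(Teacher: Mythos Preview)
The paper does not give its own proof of \cref{RvS23Theorem9.1}; the result is quoted from \cite{regeta2021characterizing} and only \cref{RvS23TheoremC1} is sketched in the surrounding text. So there is no in-paper argument to compare against, and I comment only on the soundness of your proposal.

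Your second step has a genuine gap. The ``rigidity principle'' you invoke --- that every abstract group isomorphism between two algebraic tori over $\mathbf{k}$ factors as an algebraic isomorphism composed with a field automorphism --- is false as stated. For an uncountable algebraically closed field of characteristic zero, the abstract group $\mathbf{k}^*$ decomposes as $\mu_\infty \oplus V$ with $\mu_\infty \simeq \mathbb{Q}/\mathbb{Z}$ and $V$ a $\mathbb{Q}$-vector space of dimension $|\mathbf{k}|$; arbitrary $\mathbb{Q}$-linear automorphisms of $V$ give abstract automorphisms of $\mathbf{k}^*$, and these vastly outnumber the ones induced by field automorphisms. Hence $\varphi|_T$ alone carries no information that singles out a field automorphism $\tau$, and your construction of $\psi$ and $\mu$ in that paragraph is unjustified.

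The correct source of $\tau$ is exactly what appears in your third step: the additive bijection $f \colon \GA \to \GA$ obtained from a single root subgroup becomes, after normalization and via the conjugation relation $f(\chi^m(t)\,c)=\chi^{m'}(\varphi(t))\,f(c)$, a field automorphism. A valid argument must start there --- extract $\tau$ from one root subgroup, then use the full family of root subgroups (whose weights span $M_T$, by \cref{famous-remark}) together with the conjugation relations to show both that the same $\tau$ works for every root subgroup and that $\varphi^{-1}|_{\varphi(T)}\circ\tau_{\varphi(T)}$ is algebraic. Your writeup runs this logic in reverse: it assumes $\tau$ exists by torus rigidity and then verifies consistency with root subgroups, but the step meant to produce $\tau$ does not stand on its own.
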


\begin{remark} \label{Th1.7-toric}
Under the hypothesis of \cref{RvS23Theorem9.1}, it follows from \cref{RvS23TheoremC1} that $Y$ is also toric of dimension $\dim X$ and so we can assume that $X=X_S$ and $Y=X_{S'}$ for some affine semigroups $S$ and $S'$ with $S$ saturated. Now, pick minimal embeddings  $S\hookrightarrow M$ and $S'\hookrightarrow M'$ into free abelian groups $M$ and $M'$, respectively. With this definitions, \cref{RvS23Theorem9.1} states that $\nu\colon M\to M'$ is an isomorphism that induces a bijection from $\mathcal{R}(S)$ to $\mathcal{R}(S')$. 
\end{remark}

\section{Proof of Theorem A}
\label{sectionproofoftheorem}

For the proof of \cref{maintheorem}, we need the following technical lemma that is a slight generalization of \cite[Lemma~2.4 and Remark~2.5]{L10}.

\begin{lemma} \label{famous-remark}
 Let $M$ be a free abelian group and let $S$ be an affine semigroup minimally embedded in $M$. 
 \begin{enumerate}

     \item[$(i)$] If $\alpha\in \mathcal{R}_\rho(S)$ and $m\in \rho^\bot\cap S$, then $\alpha+m\in \mathcal{R}_\rho(S)$. 
     \item[$(ii)$]  If $S$ is saturated, then $\mathcal{R}_\rho(S)\neq \emptyset$ for all $\rho\in S^*(1)$.
 \end{enumerate}
 \end{lemma}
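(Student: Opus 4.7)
For part $(i)$, the plan is to verify \cref{definition-root} directly. Bilinearity gives $\langle \alpha + m, \rho \rangle = \langle \alpha, \rho \rangle + \langle m, \rho \rangle = -1 + 0 = -1$ using $m \in \rho^\bot$, which is condition $(i)$ of the definition of a Demazure root. For condition $(ii)$, I let $m' \in S$ with $\langle m', \rho \rangle > 0$; then $m' + \alpha \in S$ because $\alpha \in \mathcal{R}_\rho(S)$, and since $m \in S$ I conclude $m' + (\alpha + m) = (m' + \alpha) + m \in S$. This part is entirely routine.

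For part $(ii)$, the saturation hypothesis lets me apply the equivalent convex-geometric \cref{definition-root-normal}: I must find $\alpha \in M$ with $\langle \alpha, \rho \rangle = -1$ and $\langle \alpha, \rho' \rangle \geq 0$ for every $\rho' \in S^*(1) \setminus \{\rho\}$. Since $S^*$ is always saturated, its ray generator $\rho$ is primitive in $N$, so $\langle \cdot, \rho \rangle \colon M \to \ZZ$ is surjective and I can fix $\alpha_0 \in M$ with $\langle \alpha_0, \rho \rangle = -1$. If $\rho$ is the only ray of $S^*$, then $\alpha = \alpha_0$ already satisfies the definition.

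Otherwise, I will correct $\alpha_0$ by a large positive multiple of a suitable element $\gamma \in \rho^\bot \cap M$. Writing $\sigma = \cone(S^*) \subset N_\QQ$ and $\sigma^\vee = \cone(S) \subset M_\QQ$, the minimality of the embedding forces $\sigma^\vee$ to be full-dimensional and $\sigma$ to be strongly convex, so $\tau := \rho^\bot \cap \sigma^\vee$ is a facet of $\sigma^\vee$ of dimension $n-1$ spanning $\rho^\bot$. The plan is to pick a lattice point $\gamma$ in the relative interior of $\tau$, which exists since the relative interior of any rational polyhedral cone contains lattice points. For each $\rho' \in S^*(1) \setminus \{\rho\}$, the set $\tau \cap \rho'^\bot$ is a proper face of $\tau$ (proper because otherwise $\rho^\bot \subset \rho'^\bot$ would force $\rho' = \pm\rho$, which is impossible since $\rho \neq \rho'$ are both in the strongly convex cone $\sigma$), so $\gamma$ avoids it and therefore $\langle \gamma, \rho' \rangle > 0$. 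Setting
\[
\alpha = \alpha_0 + k\gamma
\]
for $k \in \ZZ_{>0}$ large enough then gives $\langle \alpha, \rho' \rangle = \langle \alpha_0, \rho' \rangle + k\langle \gamma, \rho' \rangle \geq 0$ for all $\rho' \neq \rho$, while preserving $\langle \alpha, \rho \rangle = -1$, so $\alpha \in \mathcal{R}_\rho(S)$.

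The only mildly delicate step is the existence of a lattice point in the relative interior of $\tau$ together with the identification of $\tau$ as a facet of dimension $n-1$; both are standard facts about rational polyhedral cones and do not present a real obstacle.
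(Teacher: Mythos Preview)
Your proof of part $(i)$ is identical to the paper's.

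For part $(ii)$ your argument is correct but follows a different route from the paper. You construct a root explicitly: pick any $\alpha_0\in M$ with $\langle\alpha_0,\rho\rangle=-1$, then push it into $\mathcal{R}_\rho(S)$ by adding a large multiple of a lattice point $\gamma$ in the relative interior of the facet $\tau=\rho^\bot\cap\sigma^\vee$. The paper instead argues by contradiction via an auxiliary saturated semigroup
\[
S'=\big\{m\in M\mid \langle m,\rho'\rangle\ge 0 \text{ for all }\rho'\in S^*(1)\setminus\{\rho\}\big\},
\]
observes that $\mathcal{R}_\rho(S)=S'\cap\{\langle\,\cdot\,,\rho\rangle=-1\}$, and shows that emptiness of this slice would force $\rho\in(S')^*$, contradicting extremality of $\rho$ in $\sigma$. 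Your construction is more explicit and makes the geometric input (a lattice point in the relative interior of the facet) completely transparent; the paper's version is shorter and packages $\mathcal{R}_\rho(S)$ neatly as a hyperplane slice of $S'$, a description that is convenient for the later use in \cref{Lemma: subset distinguished ray}. Both rely on the same underlying facts: primitivity of $\rho$ (so the hyperplane $\{\langle\,\cdot\,,\rho\rangle=-1\}$ has lattice points) and full-dimensionality of $\sigma^\vee$ (so $\tau$ is an $(n-1)$-dimensional rational cone).
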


\begin{proof}
 We first prove $(i)$. Let $\alpha\in \mathcal{R}_\rho(S)$ and let $m\in \rho^\bot\cap S$. Then, $\langle m+\alpha,\rho\rangle=\langle m,\rho\rangle+\langle \alpha,\rho\rangle=-1$. This yields \cref{definition-root}~$(i)$. Now, let $m' \in S$ be such that $\langle m',\rho\rangle >0$. Since $\alpha \in \mathcal{R}_{\rho}(S)$, we have $m'+\alpha\in S$. Hence, $m'+m+\alpha\in S$, proving that $\alpha +m\in \mathcal{R}_{\rho}(S)$. This yields \cref{definition-root}~$(ii)$, proving the first statement of the lemma.
    
 To prove $(ii)$, assume that $S$ is saturated and let $\rho \in S^*(1)$. Let $S'$ be the saturated affine semigroup whose dual $(S')^*$ has rays $S^*(1)\setminus\{\rho\}$, i.e., 
 $$S'=\big\{m\in M\mid \langle m,\rho'\rangle \geq 0 \text{ for all }\rho'\in S^*(1)\setminus\{\rho\}\big\}\,.$$ 
 By the definition of $S'$, we have $S\subset S'$. Moreover, by \cref{definition-root-normal}, we have
 $$\mathcal{R}_\rho(S)=S'\cap \{\alpha\in M\mid \langle \alpha,\rho\rangle=-1\}\,.$$
 Now, assume that $\mathcal{R}_\rho(S)=\emptyset$. By the saturation of $S'$, this implies that 
 $\langle \alpha,\rho\rangle\geq 0$ for all $\alpha\in S'$. But this contradicts the fact that $\rho$ is not a ray of $(S')^*$. We conclude that $\mathcal{R}_\rho(S)\neq\emptyset$, proving the second statement of the lemma.
\end{proof}

Let $S$ and $S'$ be pointed saturated affine semigroups minimally embedded in $M$. In \cite[Lemma 6.11]{LRU23}, we proved that if $\mathcal{R}(S)=\mathcal{R}(S')$ then $S=S'$. This shows, in particular, that a normal affine toric variety $X_S$ is completely determined by the set of its roots with respect to any fixed maximal torus in $\operatorname{Aut}(X_S)$. To prove \cref{maintheorem}, we need the following stronger statement that allows us to deal with the case of non-normal affine toric varieties.

\begin{lemma}\label{Lemma: subset distinguished ray}
Let $S$ and $S'$ be saturated affine semigroups minimally embedded in $M$ and assume that $S$ and $S'$ are both different from $M$. If $\mathcal{R}(S)\subset \mathcal{R}(S')$ then $S=S'$.
\end{lemma}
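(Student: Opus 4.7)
My plan is to show equality of the dual cones $\sigma = S^*$ and $\sigma' = (S')^*$ in $N_\Q$, which for saturated semigroups is equivalent to $S = S'$. I would establish this in two directions: $S^*(1) \subset (S')^*(1)$ (giving $\sigma \subset \sigma'$ and hence $S \supset S'$) and $\sigma^\vee \subset (\sigma')^\vee$ (giving $S \subset S'$). Both rest on the following key claim: for every $\rho \in S^*(1)$, the facet semigroup $\rho^\bot \cap S$ is contained in $S'$.

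To prove the key claim, I combine both parts of \cref{famous-remark}. By part $(ii)$, choose $\alpha \in \mathcal{R}_\rho(S)$; by part $(i)$, for any $m \in \rho^\bot \cap S$ and any $k \in \Z_{\geq 0}$ the element $\alpha + km$ lies in $\mathcal{R}_\rho(S) \subset \mathcal{R}(S')$. For each $\rho'' \in (S')^*(1)$ this forces $\langle \alpha + km, \rho'' \rangle \geq -1$; dividing by $k$ and letting $k \to \infty$ gives $\langle m, \rho'' \rangle \geq 0$, so $m \in S'$. The same reasoning applied to $m \in L \cap M \subset \rho^\bot \cap S$, where $L$ is the lineality space of $\sigma^\vee$, also yields $L \subset (\sigma')^\vee$.

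With the key claim in hand, to show $S \supset S'$ I would track the distinguished ray $\rho' \in (S')^*(1)$ of $\alpha \in \mathcal{R}_\rho(S) \subset \mathcal{R}(S')$: for every $m \in \rho^\bot \cap S$, the distinguished ray of the translate $\alpha + m$ must also equal $\rho'$, since otherwise $\langle m, \rho'_m \rangle = -1 - \langle \alpha, \rho'_m \rangle \leq -1$ would contradict $m \in S'$. Hence $\rho'$ vanishes on $\rho^\bot \cap S$, which spans $\rho^\bot$ rationally, so $\rho' \in \Q \rho$; since both are primitive, $\rho' = \pm \rho$, and the sign $\langle \alpha, \rho' \rangle = -1 = \langle \alpha, \rho \rangle$ rules out $-\rho$, so $\rho = \rho' \in (S')^*(1)$. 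For the reverse containment $S \subset S'$, the key claim gives $\partial \sigma^\vee \subset (\sigma')^\vee$; combined with $L \subset (\sigma')^\vee$ and the decomposition $\sigma^\vee = L + \cone(v_1, \ldots, v_l)$ obtained by lifting the extreme rays of the strongly convex quotient $\sigma^\vee/L$ to lattice points $v_i \in \partial \sigma^\vee$, this yields $\sigma^\vee \subset (\sigma')^\vee$.

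The subtle point I expect to wrestle with is the degenerate case where $\sigma^\vee/L$ is one-dimensional, i.e., $\sigma^\vee$ is itself a half-space: then the sole extreme ray of the quotient lifts to an interior point of $\sigma^\vee$ rather than to $\partial \sigma^\vee$, so the decomposition argument collapses. Here I would argue directly: the inclusion $\rho^\bot \cap M \subset S'$ forces $(\sigma')^\vee$ to contain the full hyperplane $\rho^\bot$, and then the standing hypotheses $S' \neq M$, minimal embedding of $S'$ in $M$, and $\mathcal{R}(S) \subset \mathcal{R}(S')$ jointly leave $S' = S$ as the only possibility.
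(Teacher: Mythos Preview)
Your proof is correct and takes a route that differs from the paper's, particularly in the second half. Both arguments begin by establishing $S^*(1)\subset (S')^*(1)$: the paper does this by characterizing the distinguished ray $\rho_\alpha$ of $\alpha\in\mathcal{R}(S)$ intrinsically as the orthogonal of the unique hyperplane $H$ for which $H\cap(\mathcal{R}(S)-\alpha)$ spans $H$, and noting that this hyperplane is unchanged upon passing to $\mathcal{R}(S')$; you instead first isolate the key claim $\rho^\bot\cap S\subset S'$ via the limiting inequality $\langle\alpha+km,\rho''\rangle\geq -1$, and then use it to force every translate $\alpha+m$ to keep the same distinguished ray in $S'$. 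For the remaining inclusion the proofs diverge more substantially. The paper argues by contradiction: a hypothetical extra ray $\rho'\in (S')^*(1)\setminus S^*(1)$ allows one to find $m\in\rho^\bot\cap S$ with $\langle m,\rho'\rangle<0$ and then $\alpha+k_0 m\in\mathcal{R}(S)$ pairs negatively with both $\rho$ and $\rho'$, so cannot lie in $\mathcal{R}(S')$. You instead prove $\sigma^\vee\subset(\sigma')^\vee$ directly via the decomposition $\sigma^\vee=L+\cone(v_1,\dots,v_l)$ with each $v_i$ chosen on a facet, so that the key claim places $L$ and every $v_i$ inside $S'$. Your approach has the virtue of extracting the key claim as a clean reusable statement, at the price of the half-space case split; the paper's contradiction is more uniform and a bit shorter.
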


\begin{proof}

Let $\alpha\in \mathcal{R}(S)$. Then, there exists $\rho_\alpha\in S^*(1)$, the distinguished ray of $\alpha$, such that $\langle \alpha,\rho_\alpha\rangle=-1$ 
 and $\langle \alpha,\rho\rangle\geq 0$ for all $\rho\in S^*(1)\setminus\{\rho_\alpha\}$. By \cref{famous-remark}~$(i)$, the element $\alpha+m$ is also in $\mathcal{R}(S)$ for every $m\in \rho^{\bot}_\alpha\cap S$. Hence, there exists a unique hyperplane $H$ in $M_\QQ$ such that the linear span of $H\cap (\mathcal{R}(S)-\alpha)$ equals $H$. The orthogonal $H^\bot$ is a line $L$ that contains only two primitive elements $\pm p$ of $N$. The ray $\rho_\alpha$ is $-\langle \alpha,p\rangle\cdot p$ since $\langle \alpha,\rho_\alpha\rangle=-\langle \alpha,p\rangle^2=-1$.

 Furthermore, since $\mathcal{R}(S)\subset \mathcal{R}(S')$ we have that $\alpha$ is also in $\mathcal{R}(S')$. By the same analysis we obtain that there exists a unique hyperplane $H'$ in $M_\QQ$ such that the linear span of $H'\cap (\mathcal{R}(S')-\alpha)$ equals $H'$. But since 
 $$\left(\mathcal{R}(S)-\alpha\right)\subset \left(\mathcal{R}(S')-\alpha\right)\,,$$
 we conclude $H=H'$. Hence, the distinguished ray $\rho'_\alpha$ of $\alpha$ as a root of $S'$ is again given by $\rho'_\alpha=-\langle \alpha,p\rangle\cdot p$. This yields $\rho_\alpha=\rho'_\alpha$. Moreover, by \cref{famous-remark}~$(ii)$, we have that every ray $\rho$ in $S^*(1)$ and $S'^*(1)$ is the distinguished ray of some Demazure root so we have proved that 
 $$S^*(1)\subset (S')^*(1)\,.$$

We will now prove that $S^*(1)=(S')^*(1)$  which  in turn proves $S=S'$ since $S$ and $S'$ are both saturated. We proceed by contradiction. Assume that there exists $\rho'\in (S')^*(1)\setminus S^*(1)$. Pick also $\rho\in S^*(1)\subset (S')^*(1)$. Without loss of generality, we can assume that $\rho'$ and $\rho$ are in the same facet of $(S')^*$. Let $\alpha\in \mathcal{R}(S)\subset \mathcal{R}(S')$ with distinguished ray $\rho_\alpha=\rho$. Since $\rho'$ is not a ray in $S^*(1)$, there exists $m\in \rho^\bot\cap S$ such that $\langle m,\rho'\rangle<0$.  Now, by \cref{famous-remark}~$(i)$, we have that $\alpha+km\in \mathcal{R}(S)$ is also a Demazure root of $S$ with distinguished ray $\rho$ for all $k\in \ZZ_{\geq 0}$. On the other hand, we have that
 $$ \langle \alpha+km,\rho'\rangle=\langle \alpha,\rho'\rangle+k\cdot\langle m,\rho'\rangle<0\quad \mbox{for all }\quad k> -\nicefrac{\langle m,\rho'\rangle}{\langle \alpha,\rho'\rangle}.$$
 Fix any $k_0\in \ZZ_{\geq 0}$ such that $k_0 > 
 -\nicefrac{\langle m,\rho'\rangle}{\langle \alpha,\rho'\rangle}$. We conclude that 
 $$ \langle \alpha+k_0 m,\rho'\rangle<0, \quad\mbox{and}\quad \langle \alpha+k_0 m,\rho\rangle<0\,.$$
Hence, $\alpha+k_0 m\notin \mathcal{R}(S')$ and $\alpha+k_0 m\in \mathcal{R}(S)$. This is a contradiction since $\mathcal{R}(S)\subset \mathcal{R}(S')$.
\end{proof}

We now apply \cref{Lemma: subset distinguished ray} to prove that if $S$ and $S'$ have the same set of roots and $S$ is saturated, then $S$ is the normalization of $S'$. This will be the main combinatorial ingredient in the proof of \cref{maintheorem}.

\begin{lemma} \label{normalization}
 Let $M$ be a free abelian group of rank $n\in \ZZ_{>0}$ and let $S$ and $S'$ be affine semigroups embedded in $M$ minimally. Assume further that $S$ is saturated. If $\mathcal{R}(S)=\mathcal{R}(S')\neq \emptyset$, then $S$ is the saturation of $S'$. 
\end{lemma}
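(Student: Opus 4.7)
The plan is to reduce the statement to \cref{Lemma: subset distinguished ray} via \cref{proposition: 3.6 DL24}. The latter ensures that every Demazure root of $S'$ is a Demazure root of its saturation $\widetilde{S'}$, so the equality $\mathcal{R}(S)=\mathcal{R}(S')$ immediately yields the inclusion $\mathcal{R}(S)\subset \mathcal{R}(\widetilde{S'})$. The target conclusion $S=\widetilde{S'}$ is then exactly what \cref{Lemma: subset distinguished ray} delivers, provided its hypotheses are met for the pair $(S,\widetilde{S'})$.

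Next I would verify these hypotheses. Saturation is clear on both sides: $S$ is saturated by assumption and $\widetilde{S'}$ by construction. The minimal embedding of $\widetilde{S'}$ into $M$ is inherited from that of $S'$, since any subgroup of $M$ containing $\widetilde{S'}$ also contains $S'$, and $S'$ is minimally embedded. It remains to rule out the degenerate case where either semigroup equals all of $M$. For this I would note that $M^*=\{0\}$ has no rays, so condition $(i)$ of \cref{definition-root} forces $\mathcal{R}(M)=\emptyset$. The hypothesis $\mathcal{R}(S)=\mathcal{R}(S')\neq\emptyset$, together with the inclusion $\mathcal{R}(S')\subset\mathcal{R}(\widetilde{S'})$, then shows $S\neq M$ and $\widetilde{S'}\neq M$. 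Applying \cref{Lemma: subset distinguished ray} completes the proof.

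There is no serious obstacle to anticipate: once the inclusion of root sets is obtained via \cref{proposition: 3.6 DL24}, the argument is a direct invocation of the previous lemma, with the remaining work amounting to a routine check that $\widetilde{S'}$ inherits the minimal-embedding and non-triviality conditions from $S'$.
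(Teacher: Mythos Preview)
Your proposal is correct and follows essentially the same route as the paper: apply \cref{proposition: 3.6 DL24} to obtain $\mathcal{R}(S)=\mathcal{R}(S')\subset\mathcal{R}(\widetilde{S'})$, then invoke \cref{Lemma: subset distinguished ray} on the saturated pair $(S,\widetilde{S'})$ after noting that nonemptiness of the root sets forces both to differ from $M$. You are in fact slightly more careful than the paper in explicitly verifying the minimal-embedding condition for $\widetilde{S'}$ and the inequality $\widetilde{S'}\neq M$.
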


\begin{proof}
Since $\mathcal{R}(S)$ and $\mathcal{R}(S')$ are both different from the empty set, we have that $S$ and $S'$ are both different from $M$ since $S^*$ and $(S')^*$ have at least one ray. Now, let $\widetilde{S}'$ be the saturation of $S'$. By \cref{proposition: 3.6 DL24}, we have $\mathcal{R}(S)=\mathcal{R}(S')\subset \mathcal{R}(\widetilde{S}')$. Now, \cref{Lemma: subset distinguished ray} yields $S=\widetilde{S}'$ and so the result follows.
\end{proof}

All the saturation conditions in \cref{famous-remark}~$(i)$, \cref{Lemma: subset distinguished ray} and \cref{normalization} are essential as the following example shows.

\begin{example} \label{norm-required}
Letting $M=\ZZ^2$, we let $\sigma_1,\sigma_2\subset M_\QQ$ be the cones $\sigma_1=\cone((0,1),(2,-1))$ and $\sigma_2=\cone((0,1),(3,-1))$, respectively. Their dual cones are $\sigma_1^\vee=\cone((1,0),(1,2))$ and $\sigma_2^\vee=\cone((1,0),(1,3))$, respectively. The corresponding affine semigroups $S_1$ and $S_2$ are generated by the sets $\{(1,0),(1,1),(1,2)\}$ and $\{(1,0),(1,1),(1,2), (1,3)\}$, respectively. Let $S'_1=S_1\setminus \{(2k-1,4k-2)\mid k\in \ZZ_{>0}\}$ and $S'_2=S_2\setminus \{(2k-1,6k-3)\mid k\in \ZZ_{>0}\}$ presented in Figure 1 and Figure 2, respectively. We have
$$\mathcal{R}(S'_1)=\mathcal{R}(S'_2)=\{(k,-1)\mid k\in \ZZ_{\ge0}\}\,.$$
Nevertheless, the saturations of $S_1'$ and $S_2'$ are $S_1$ and $S_2$, respectively. Hence the saturations of $S_1'$ and $S_2'$ do not coincide.
$$
\begin{array}{cc}
\begin{picture}(130,90)
\definecolor{gray1}{gray}{0.7}
\definecolor{gray2}{gray}{0.85}
\definecolor{green}{RGB}{0,124,0}

\textcolor{gray2}{\put(12,15.2){\vector(1,0){80}}}
\textcolor{gray2}{\put(20,5){\vector(0,1){80}}}

\put(10,5){\textcolor{gray1}{\circle*{3}}}
\put(20,5){\textcolor{green}{\circle*{3}}}
\put(30,5){\textcolor{green}{\circle*{3}}} 
\put(40,5){\textcolor{green}{\circle*{3}}} 
\put(50,5){\textcolor{green}{\circle*{3}}} 
\put(60,5){\textcolor{green}{\circle*{3}}} 
\put(70,5){\textcolor{green}{\circle*{3}}} 
\put(80,5){\textcolor{green}{\circle*{3}}} 
\put(90,5){\textcolor{green}{\circle*{3}}}

\put(10,15){\textcolor{gray1}{\circle*{3}}}
\put(20,15){\circle*{3}} 
\put(30,15){\circle*{3}} 
\put(40,15){\circle*{3}} 
\put(50,15){\circle*{3}}
\put(60,15){\circle*{3}}
\put(70,15){\circle*{3}}
\put(80,15){\circle*{3}}
\put(90,15){\circle*{3}}

\put(10,25){\textcolor{gray1}{\circle*{3}}}
\put(20,25){\textcolor{gray1}{\circle*{3}}}
\put(30,25){\circle*{3}} 
\put(40,25){\circle*{3}} 
\put(50,25){\circle*{3}}
\put(60,25){\circle*{3}}
\put(70,25){{\circle*{3}}} 
\put(80,25){\circle*{3}}
\put(90,25){\circle*{3}}

\put(10,35){\textcolor{gray1}{\circle*{3}}}
\put(20,35){\textcolor{gray1}{\circle*{3}}}
\put(30,35){{\circle{3}}}
\put(40,35){\circle*{3}} 
\put(50,35){{\circle*{3}}}
\put(60,35){{\circle*{3}}}
\put(70,35){\circle*{3}} 
\put(80,35){\circle*{3}}
\put(90,35){\circle*{3}}

\put(10,45){\textcolor{gray1}{\circle*{3}}}
\put(20,45){\textcolor{gray1}{\circle*{3}}}
\put(30,45){\textcolor{gray1}{\circle*{3}}}
\put(40,45){{\circle*{3}}}
\put(50,45){\circle*{3}}
\put(60,45){\circle*{3}}
\put(70,45){\circle*{3}} 
\put(80,45){\circle*{3}} 
\put(90,45){\circle*{3}}

\put(10,55){\textcolor{gray1}{\circle*{3}}}
\put(20,55){\textcolor{gray1}{\circle*{3}}}
\put(30,55){\textcolor{gray1}{\circle*{3}}}
\put(40,55){{\circle*{3}}}
\put(50,55){\circle*{3}}
\put(60,55){\circle*{3}}
\put(70,55){\circle*{3}} 
\put(80,55){\circle*{3}}
\put(90,55){\circle*{3}}

\put(10,65){\textcolor{gray1}{\circle*{3}}}
\put(20,65){\textcolor{gray1}{\circle*{3}}}
\put(30,65){\textcolor{gray1}{\circle*{3}}}
\put(40,65){\textcolor{gray1}{\circle*{3}}}
\put(50,65){{\circle*{3}}}
\put(60,65){\circle*{3}}
\put(70,65){\circle*{3}} 
\put(80,65){\circle*{3}} 
\put(90,65){\circle*{3}}

\put(10,75){\textcolor{gray1}{\circle*{3}}}
\put(20,75){\textcolor{gray1}{\circle*{3}}}
\put(30,75){\textcolor{gray1}{\circle*{3}}}
\put(40,75){\textcolor{gray1}{\circle*{3}}}
\put(50,75){{\circle{3}}}
\put(60,75){{\circle*{3}}}
\put(70,75){\circle*{3}} 
\put(80,75){\circle*{3}} 
\put(90,75){\circle*{3}} 

\put(10,85){\textcolor{gray1}{\circle*{3}}}
\put(20,85){\textcolor{gray1}{\circle*{3}}}
\put(30,85){\textcolor{gray1}{\circle*{3}}}
\put(40,85){\textcolor{gray1}{\circle*{3}}}
\put(50,85){\textcolor{gray1}{\circle*{3}}}
\put(60,85){{\circle*{3}}}
\put(70,85){{\circle*{3}}}
\put(80,85){\circle*{3}} 
\put(90,85){\circle*{3}} 

\end{picture}&\begin{picture}(130,90)
\definecolor{gray1}{gray}{0.7}
\definecolor{gray2}{gray}{0.85}
\definecolor{green}{RGB}{0,124,0}

\textcolor{gray2}{\put(12,15.2){\vector(1,0){80}}}
\textcolor{gray2}{\put(20,5){\vector(0,1){80}}}

\put(10,5){\textcolor{gray1}{\circle*{3}}}
\put(20,5){\textcolor{green}{\circle*{3}}}
\put(30,5){\textcolor{green}{\circle*{3}}} 
\put(40,5){\textcolor{green}{\circle*{3}}} 
\put(50,5){\textcolor{green}{\circle*{3}}} 
\put(60,5){\textcolor{green}{\circle*{3}}} 
\put(70,5){\textcolor{green}{\circle*{3}}} 
\put(80,5){\textcolor{green}{\circle*{3}}} 
\put(90,5){\textcolor{green}{\circle*{3}}}

\put(10,15){\textcolor{gray1}{\circle*{3}}}
\put(20,15){\circle*{3}} 
\put(30,15){\circle*{3}} 
\put(40,15){\circle*{3}} 
\put(50,15){\circle*{3}}
\put(60,15){\circle*{3}}
\put(70,15){\circle*{3}}
\put(80,15){\circle*{3}}
\put(90,15){\circle*{3}}

\put(10,25){\textcolor{gray1}{\circle*{3}}}
\put(20,25){\textcolor{gray1}{\circle*{3}}}
\put(30,25){\circle*{3}} 
\put(40,25){\circle*{3}} 
\put(50,25){\circle*{3}}
\put(60,25){\circle*{3}}
\put(70,25){{\circle*{3}}} 
\put(80,25){\circle*{3}}
\put(90,25){\circle*{3}}

\put(10,35){\textcolor{gray1}{\circle*{3}}}
\put(20,35){\textcolor{gray1}{\circle*{3}}}
\put(30,35){{\circle*{3}}}
\put(40,35){\circle*{3}} 
\put(50,35){{\circle*{3}}}
\put(60,35){{\circle*{3}}}
\put(70,35){\circle*{3}} 
\put(80,35){\circle*{3}}
\put(90,35){\circle*{3}}

\put(10,45){\textcolor{gray1}{\circle*{3}}}
\put(20,45){\textcolor{gray1}{\circle*{3}}}
\put(30,45){{\circle{3}}}
\put(40,45){{\circle*{3}}}
\put(50,45){\circle*{3}}
\put(60,45){\circle*{3}}
\put(70,45){\circle*{3}} 
\put(80,45){\circle*{3}} 
\put(90,45){\circle*{3}} 

\put(10,55){\textcolor{gray1}{\circle*{3}}}
\put(20,55){\textcolor{gray1}{\circle*{3}}}
\put(30,55){\textcolor{gray1}{\circle*{3}}}
\put(40,55){{\circle*{3}}}
\put(50,55){\circle*{3}}
\put(60,55){\circle*{3}}
\put(70,55){\circle*{3}} 
\put(80,55){\circle*{3}}
\put(90,55){\circle*{3}}

\put(10,65){\textcolor{gray1}{\circle*{3}}}
\put(20,65){\textcolor{gray1}{\circle*{3}}}
\put(30,65){\textcolor{gray1}{\circle*{3}}}
\put(40,65){{\circle*{3}}}
\put(50,65){{\circle*{3}}}
\put(60,65){\circle*{3}}
\put(70,65){\circle*{3}} 
\put(80,65){\circle*{3}} 
\put(90,65){\circle*{3}}

\put(10,75){\textcolor{gray1}{\circle*{3}}}
\put(20,75){\textcolor{gray1}{\circle*{3}}}
\put(30,75){\textcolor{gray1}{\circle*{3}}}
\put(40,75){{\circle*{3}}}
\put(50,75){{\circle*{3}}}
\put(60,75){{\circle*{3}}}
\put(70,75){\circle*{3}} 
\put(80,75){\circle*{3}} 
\put(90,75){\circle*{3}}

\put(10,85){\textcolor{gray1}{\circle*{3}}}
\put(20,85){\textcolor{gray1}{\circle*{3}}}
\put(30,85){\textcolor{gray1}{\circle*{3}}}
\put(40,85){\textcolor{gray1}{\circle*{3}}}
\put(50,85){{\circle*{3}}}
\put(60,85){{\circle*{3}}}
\put(70,85){{\circle*{3}}}
\put(80,85){\circle*{3}} 
\put(90,85){\circle*{3}} 

\end{picture}\\
\text{Figure 1}&\text{Figure 2}\\
\end{array}
$$

\end{example}

In the proof of \cref{maintheorem}, we will apply  the following proposition.

\begin{proposition}\label{A1timesZisdeterminedbyroots}
Assume $X_\sigma$ is a normal affine toric variety that is isomorphic to $\mathbb{A}^k \times X_{\sigma'}$, where $X_{\sigma'}$ is a normal affine toric variety and $k\in \ZZ_{>0}$. If $X_S$ is a non-necessarily normal toric variety such that $\mathcal{R}(S)=\mathcal{R}(\sigma^\vee\cap M)$, then $X_S$ is normal and, moreover, $X_S \simeq X_\sigma$. 
\end{proposition}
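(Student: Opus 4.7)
The plan is to first invoke \cref{normalization} to identify $\sigma^\vee \cap M$ with the saturation $\widetilde{S}$ of $S$. The $\mathbb{A}^1$-factor in $X_\sigma$ already supplies the Demazure root $-e_1^*$ of $\sigma^\vee \cap M$ (corresponding to the partial derivative along that factor), so $\mathcal{R}(\sigma^\vee \cap M)$ is nonempty and the lemma applies. It will then suffice to show $S = \widetilde{S}$, which forces $X_S$ to be normal and isomorphic to $X_\sigma$.

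Writing $\mathbb{A}^k \simeq \mathbb{A}^1 \times \mathbb{A}^{k-1}$, I reduce to the case $k = 1$ and may assume $X_\sigma \simeq \mathbb{A}^1 \times X_{\sigma'}$ with $X_{\sigma'}$ still a normal affine toric variety. Choose a compatible splitting $N = \mathbb{Z} \cdot e_1 \oplus N'$, so that $\sigma = \cone(e_1) + \sigma'$ with $\sigma' \subset N'_\mathbb{Q}$ and dually $M = \mathbb{Z} \cdot e_1^* \oplus M'$ and $\widetilde{S} = \mathbb{Z}_{\geq 0}\, e_1^* \oplus S'$, where $S' := \sigma'^{\vee} \cap M'$. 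Since every ray of $\sigma$ other than $e_1$ lies in $N'$ and therefore pairs trivially with $e_1^*$, \cref{definition-root-normal} gives the explicit description
\[
\mathcal{R}_{e_1}(\widetilde{S}) \;=\; \{\, -e_1^* + m' \mid m' \in S' \,\}.
\]
Moreover, $S$ and $\widetilde{S}$ generate the same polyhedral cone, so $S^* = \widetilde{S}^* = \sigma$ and the notion of distinguished ray is intrinsic to $\sigma$. The hypothesis $\mathcal{R}(S) = \mathcal{R}(\widetilde{S})$, combined with \cref{definition-root}, thus translates into: for every $m' \in S'$ and every $m \in S$ with positive first coordinate, $m + (-e_1^* + m') \in S$.

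The heart of the argument is a constructive descent showing $\widetilde{S} \subseteq S$. A seed element $(N, 0) \in S$ with $N$ as large as desired exists because $(1,0) \in \widetilde{S}$ forces $(N_0, 0) \in S$ for some $N_0 > 0$ by saturation, and then $(kN_0, 0) \in S$ for every $k \geq 1$. Given any target $(c, m'_1) \in \widetilde{S}$, pick $N > c$, apply the root $-e_1^*$ exactly $N - c - 1$ times to step down through $(N,0), (N-1, 0), \ldots, (c+1, 0) \in S$ (the first coordinate stays positive at every intermediate stage), and conclude by a single application of the root $-e_1^* + m'_1$ to $(c+1, 0)$, producing $(c, m'_1) \in S$. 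This yields $\widetilde{S} \subseteq S$, hence $S = \widetilde{S}$ and $X_S \simeq X_\sigma$.

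The main technical point is to establish the identification $\mathcal{R}_{e_1}(\widetilde{S}) = \{-e_1^* + m' \mid m' \in S'\}$ correctly: this uses crucially the $\mathbb{A}^1$-factor hypothesis, as the direct-sum structure is precisely what makes every shift by $m' \in S'$ available as a root. Once that is in place, the iterative descent is routine provided one tracks positivity of the first coordinate, which is automatic because all intermediate elements have first coordinate at least $c+1 > 0$.
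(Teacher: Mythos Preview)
Your proof is correct and follows essentially the same strategy as the paper's: invoke \cref{normalization} to identify $\widetilde{S}=\sigma^\vee\cap M$, then use the Demazure roots of the form $-e_1^*+m'$ (supplied by the $\mathbb{A}^1$-factor) to descend from a seed element in $S$ and recover all of $\widetilde{S}$ inside $S$. The paper keeps general $k$ and shows only that the generators $(e_i^*,0)$ and $(0,m')$ lie in $S$, whereas you reduce to $k=1$ and reach an arbitrary element $(c,m_1')$ directly; these are cosmetic differences, and your more explicit tracking of the descent and of the distinguished ray is a welcome expansion of what the paper compresses into a couple of sentences.
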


\begin{proof} 
If $M$ is the free abelian group of characters associated to $X_\sigma\simeq \AF^k\times X_{\sigma'}$, the free abelian group of characters associated to $X_{\sigma'}$ will be denoted by $M'$ so that $M=\ZZ^k\oplus M'$. By \cref{normalization}, the toric variety $X_\sigma$ is the normalization of $X_S$. Note first that since $(e^*_i,0)\in \sigma^\vee\cap M$, the element $(le^*_i,0)\in S$ for some $l\in \ZZ_{>0}$. Furthermore, by hypothesis, the element $(-e^*_i,0)$ is a Demazure root of $S$, so we conclude that $(e^*_i,0)\in S$. Let now $m'\in (\sigma')^\vee\cap M'$. A straightforward verification shows that $(-e^*_i,m')$ is also a Demazure root of $S$. Therefore, $(e^*_i,0)+(-e^*_i,m')=(0,m')\in S$. Since $(0,m')\in S$ for all $m'\in(\sigma')^\vee\cap M'$, and $(e^*_i,0)\in S$ for all $i$, we have $S=\sigma^\vee\cap M$.
\end{proof}

For the proof of \cref{isomofaut}, we need the following lemma. Recall that an affine toric variety $X$ is called non-degenerate if  it is not  isomorphic to $\GM\times Y$, where $Y$ is an affine toric variety.

\begin{lemma} \label{fix-point-is-fixed}
Assume $X_\sigma$ is a non-degenerate normal affine toric variety with acting torus $T$ that is not isomorphic to $\mathbb{A}^1 \times Y$ for any affine toric variety $Y$ and let $x_0 \in X_\sigma$ be the unique fixed point with respect to the $T$-action. If $\varphi\colon X_\sigma\to X_\sigma$ is an automorphism of $X_\sigma$, then $\varphi(x_0)=x_0$.
\end{lemma}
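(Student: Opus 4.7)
The strategy is to produce a normal subgroup $G$ of $\Aut(X_\sigma)$ that contains $T$ and fixes $x_0$. Once this is done, for any $\varphi \in \Aut(X_\sigma)$ the point $\varphi(x_0)$ is fixed by $\varphi G \varphi^{-1}=G$, hence by $T$, and since $X_\sigma$ admits a unique $T$-fixed point we conclude $\varphi(x_0)=x_0$.

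Take $G$ to be the subgroup of $\Aut(X_\sigma)$ generated by $T$ together with all root subgroups $U_\alpha$, one for each $\alpha \in \mathcal{R}(\sigma^\vee \cap M)$. The torus $T$ obviously fixes $x_0$, so the content of the first step is to show that every $U_\alpha$ also fixes $x_0$ under the hypothesis $X_\sigma \not\simeq \AF^1 \times Y$. The maximal ideal of $x_0$ in $\KK[\sigma^\vee \cap M]$ is $\mathfrak{m}_0 = \bigoplus_{m \in (\sigma^\vee\cap M)\setminus\{0\}}\KK\chi^m$. Since $\partial_\alpha(\chi^m)=\langle m,\rho_\alpha\rangle\chi^{m+\alpha}$ and $\sigma^\vee \cap M$ is saturated, a direct iteration shows that $\exp(t\partial_\alpha)$ preserves $\mathfrak{m}_0$ unless $-\alpha \in \sigma^\vee \cap M$. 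Combining $-\alpha \in \sigma^\vee \cap M$ with the conditions of \cref{definition-root-normal} forces $\langle\alpha,\rho\rangle=0$ for every $\rho\in \sigma(1)\setminus\{\rho_\alpha\}$. Setting $N' = \alpha^\bot \cap N$, we obtain $\sigma(1)\setminus\{\rho_\alpha\}\subseteq N'$ and $N = N' \oplus \ZZ\rho_\alpha$, whence $\sigma = \RR_{\ge 0}\rho_\alpha + \sigma'$ for some cone $\sigma' \subseteq N'_\QQ$, giving $X_\sigma \simeq \AF^1 \times X_{\sigma'}$ and contradicting the hypothesis. Therefore each $U_\alpha$, and hence $G$, fixes $x_0$.

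It remains to argue that $G$ is normal in $\Aut(X_\sigma)$. For any $\varphi$, conjugation sends $T$ to a maximal algebraic torus of the same dimension acting on $X_\sigma$ with a dense orbit, and sends each $U_\alpha$ to a root subgroup relative to this new torus. Invoking the conjugacy of maximal algebraic tori in $\Aut(X_\sigma)$ — or, equivalently, identifying $G$ with the neutral component $\Aut(X_\sigma)^\circ$, which for affine toric varieties is the closed subgroup generated by $T$ and all root subgroups — normality follows. The principal obstacle is precisely this last point: the intrinsicity of $G$ inside the ind-group $\Aut(X_\sigma)$ relies on conjugacy of maximal tori, a classical input going back to Demazure and used substantially in \cite{cantat2019families, regeta2021characterizing}, whereas Step 1 is an elementary consequence of \cref{definition-root-normal} together with the standard splitting criterion $X_\sigma \simeq \AF^1 \times Y$.
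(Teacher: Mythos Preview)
Your Step~1 is correct and the splitting argument is clean. The gap is in Step~2: conjugacy of maximal tori in $\Aut(X_\sigma)$ only yields some $h\in\Aut(X_\sigma)$ with $h(\varphi T\varphi^{-1})h^{-1}=T$, so $h\varphi\in N(T)$ normalizes $G$; but to conclude that $\varphi$ itself normalizes $G$ you would need $h\in G$ (or at least $h$ in the normalizer of $G$), and the bare conjugacy statement does not deliver this. Your alternative---that $G$ coincides with the neutral component $\Aut(X_\sigma)^\circ$---would suffice if established, but for affine varieties this is a nontrivial structural claim about the ind-group that is not available off the shelf in this generality, and that one would typically want to prove \emph{using} facts like the lemma at hand (indeed the paper applies the lemma precisely to control $\Aut(X_\sigma)$ in \cref{isomofaut}). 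So as written, Step~2 is either circular or rests on an input stronger than what you invoke.

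The paper's argument is entirely different and avoids any ind-group machinery. It observes that by \cite[Chapter~2, Section~1, Corollary~1~$iii)$]{KKMS73} the local ring of a non-degenerate normal affine toric variety at its torus-fixed point determines the cone up to lattice isomorphism. Any point $x_1\neq x_0$ lies on a positive-dimensional $T$-orbit, so its local ring is that of the fixed point of a toric variety admitting an $\AF^1$-factor; since by hypothesis $X_\sigma$ admits no such factor, the local rings at $x_0$ and at any other point are non-isomorphic, and $\varphi(x_0)=x_0$ follows immediately. This is essentially a two-line argument once the classical local characterization is quoted, and it sidesteps the normality question entirely.
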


\begin{proof}
 Recall that by \cite[Chapter~2, Section~1, Corollary~1~$iii)$]{KKMS73}, the local rings at the fixed points of two non-degenerate normal affine toric varieties $X_\sigma$ and $X_{\sigma'}$ are isomorphic if and only if $\sigma\simeq\sigma'$ where the isomorphism is of rational polyhedral cones. Let now $x_1\in X_\sigma$ be a point different from $x_0$. Then $x_1$ belongs to a positive-dimensional orbit and so the local ring of $x_1$ is isomorphic to the local ring at the fixed point of a toric variety $X_{\sigma'}$ that is isomorphic to a product $\mathbb{A}^1 \times Y$. Since $X$ is not isomorphic to $\mathbb{A}^1 \times Y$, it follows that $\sigma$ is not isomorphic to $\sigma'$. Hence, the local rings of $x_0$ and $x_1$ are not isomorphic. This yields $\varphi(x_0)\neq x_1$ and so $\varphi(x_0)=x_0$.
\end{proof}

Let now $S$ be an affine semigroup. An element $m\in S$ is called irreducible if $m=m'+m''$ with $m',m'' \in S$ implies that either $m'$ or $m''$ equals $0$. If $S$ is pointed, the set of irreducible elements of $S$ is denoted by $\mathcal{H}$ and is called the Hilbert basis of $S$. 

\begin{definition} 
Let $S$ be a pointed affine semigroup and let $\mathcal{H}$ be the Hilbert basis of $S$. An element $m\in S$ is said to be $l$-decomposable if there exist $h_1,\ldots, h_l\in \mathcal{H}$ such that $m=h_1+\ldots+h_l$ and if $m=h'_1+\ldots +h'_{l'}$ with $h'_1,\ldots,h'_{l'}\in \mathcal{H}$ then $l'\le l$. The set of $l$-decomposable elements of $S$ is denoted by $\mathcal{H}_l$. We also let $\widehat{\mathcal{H}}_l=\bigcup_{i=1}^l\mathcal{H}_i$.
\end{definition}

For $l=1$ we have $\mathcal{H}_1=\widehat{\mathcal{H}}_1=\mathcal{H}$. Moreover, we have the following lemma.

\begin{lemma}
 Let $S$ be a pointed semigroup. The set $S_l:=S\setminus \widehat{\mathcal{H}}_l$ is an affine subsemigroup of $S$. Moreover, $S$ and $S_l$ have the same saturation $\widetilde{S}$. 
\end{lemma}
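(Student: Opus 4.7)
The plan is to prove the two assertions separately.

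For the subsemigroup claim, I first observe that $0 \in S_l$: the Hilbert basis $\mathcal{H}$ consists of nonzero elements of $S$ (since $S$ is pointed), so $0$ has no decomposition of positive length and hence $0 \notin \widehat{\mathcal{H}}_l$. For closure under addition, I note that $m \in S_l \setminus \{0\}$ if and only if $m$ admits a decomposition $m = h_1 + \cdots + h_a$ into Hilbert basis elements with $a \geq l+1$. If $m, m' \in S_l \setminus \{0\}$ admit such decompositions of lengths $a, b \geq l+1$, then their concatenation gives a decomposition of $m + m'$ of length $a + b \geq 2(l+1) > l$, so $m + m' \in S_l$.

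For the finite generation part, which I expect to be the main technical obstacle, I will fix generators $g_1, \dots, g_s$ of $S$ and choose a $\ZZ$-grading on $M$ that is strictly positive on $S \setminus \{0\}$; such a grading exists because $S$ is pointed. Set $d_{\max} = \max\{\deg h : h \in \mathcal{H}\}$. The key claim is that every nonzero $m \in S_l$ with $\deg m > (2l+1)\, d_{\max}$ splits as $m = m_1 + m_2$ with $m_1, m_2$ nonzero elements of $S_l$. Indeed, any decomposition $m = h_1 + \cdots + h_a$ satisfies $a \cdot d_{\max} \geq \deg m > (2l+1)\, d_{\max}$, hence $a \geq 2l+2$; splitting as $m_1 = h_1 + \cdots + h_{l+1}$ and $m_2 = h_{l+2} + \cdots + h_a$ exhibits each summand as having a decomposition of length $\geq l+1$, so $m_1, m_2 \in S_l$. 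It follows that $S_l$ is generated as a monoid by its elements of degree at most $(2l+1)\, d_{\max}$, and this is a finite set because the positive grading forces each graded piece of $S$ to be finite (the finitely many generators $g_i$ each have positive degree, so only finitely many combinations fall below a given degree bound).

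For the saturation claim, it suffices to show $\cone(S_l) = \cone(S)$, since the saturation equals the cone intersected with $M$. The inclusion $\cone(S_l) \subset \cone(S)$ is immediate from $S_l \subset S$. For the reverse, given any generator $g_i$ of $S$ and any $k \in \ZZ_{>0}$ large enough that $k \deg g_i > (2l+1)\, d_{\max}$, the splitting argument above shows $k g_i \in S_l$, so $g_i \in \cone(S_l)$. Varying $i$ yields $\cone(S) \subset \cone(S_l)$, and therefore $\widetilde{S_l} = \cone(S_l) \cap M = \cone(S) \cap M = \widetilde{S}$.
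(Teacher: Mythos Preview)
Your proof is correct and proceeds along the same lines as the paper's. The closure-under-addition argument is identical to the paper's. For finite generation and the saturation claim, the paper simply writes ``$S_l$ is finitely generated and the saturations of $S$ and $S_l$ coincide since $\widehat{\mathcal{H}}_l$ is finite,'' leaving the details implicit; your argument via a positive $\ZZ$-grading and the degree bound $(2l+1)d_{\max}$ makes explicit what the paper takes for granted. One minor wording issue: when you say ``the splitting argument above shows $k g_i \in S_l$,'' you are really only invoking the first step of that argument (any Hilbert-basis decomposition of a high-degree element has length exceeding $l$), not the actual splitting; it would be cleaner to state this directly, or simply note that $\widehat{\mathcal{H}}_l$ is finite so every sufficiently large multiple of $g_i$ avoids it.
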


\begin{proof} 
Let $m=h_1+\ldots+h_r$ and $m'=h'_1+\ldots +h'_{r'}$ in $S_l$ with $h_1,\ldots,h_r,h'_1,\ldots,h'_{r'}$ in $\H$. By definition of $S_l$ we have $r,r'>l$. Now the sum $m+m'=h_1+\ldots+h_r+h'_1+\ldots +h'_{r'}$ is also contained in $S_l$ since $r+r'>l$. Furthermore, $S_l$ is finitely generated and the saturations of $S$ and $S_l$ coincide since $\widehat{\mathcal{H}}_l$ is finite.
\end{proof}

We will now prove the following proposition.

\begin{proposition}\label{isomofaut}
Let $X_\sigma$ be a normal non-degenerate affine toric variety not isomorphic to the algebraic torus. If further $X_\sigma$ is not isomorphic to a product $\mathbb{A}^1 \times Y$ for a toric variety $Y$, then for any positive integer $l$ we have 
$$\Aut(\GM^k\times X_\sigma) \simeq \Aut(\GM^k\times X_{S_l})\mbox{ as an ind-group}\,,$$
where $k\in \ZZ_{>0}$, $S=\sigma^{\vee}\cap M$ and $S_l=S\setminus\widehat{\mathcal{H}}_l$.
\end{proposition}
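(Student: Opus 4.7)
The plan is to construct the claimed isomorphism via the normalization morphism $\pi\colon \GM^k\times X_\sigma \to \GM^k\times X_{S_l}$, which exhibits $\GM^k\times X_\sigma$ as the normalization of $\GM^k\times X_{S_l}$ because $S$ is the saturation of $S_l$. Every automorphism of $\GM^k\times X_{S_l}$ lifts uniquely to $\GM^k\times X_\sigma$, giving an injective group homomorphism $\iota\colon\Aut(\GM^k\times X_{S_l})\hookrightarrow\Aut(\GM^k\times X_\sigma)$. The substantive task will be to prove that $\iota$ is surjective, i.e., that every automorphism of $\GM^k\times X_\sigma$ stabilizes the subalgebra $\KK[\ZZ^k\oplus S_l]\subset\KK[\ZZ^k\oplus S]$ and therefore descends. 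The ind-group statement will follow at the end since $\pi$ is finite birational and the natural degree filtrations pass through.

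To verify surjectivity I will reduce to checking stabilization on a generating family of $\Aut(\GM^k\times X_\sigma)$: the torus $\GM^k\times T$, the root subgroups, and the discrete symmetries coming from automorphisms of the semigroup $\ZZ^k\oplus S$. The torus merely rescales each character $t^a\chi^m$ and so trivially preserves the subalgebra. A discrete symmetry is a semigroup automorphism of $\ZZ^k\oplus S$; since $S$ is pointed, it must preserve the invertible part $\ZZ^k$ and induce an automorphism of $S$, which in turn permutes the intrinsic Hilbert basis $\mathcal{H}$, hence $\widehat{\mathcal{H}}_l$, hence $S_l$, so it also stabilizes the subalgebra. For root subgroups, \cref{section-demazure} identifies them with elements $(a,\alpha)\in\mathcal{R}(\ZZ^k\oplus S)=\ZZ^k\times\mathcal{R}(S)$ acting through the homogeneous locally nilpotent derivation
\[
\partial_{(a,\alpha)}(t^b \chi^m) = \langle m,\rho\rangle\, t^{a+b}\chi^{m+\alpha},
\]
which preserves $\KK[\ZZ^k\oplus S_l]$ precisely when $\alpha\in\mathcal{R}(S_l)$. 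Thus the whole surjectivity question collapses to the combinatorial identity $\mathcal{R}(S)=\mathcal{R}(S_l)$.

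The inclusion $\mathcal{R}(S_l)\subset\mathcal{R}(S)$ is \cref{proposition: 3.6 DL24}. For the reverse inclusion, I will first establish the following characterization by a direct splitting argument: a normal non-degenerate affine toric variety $X_\sigma$ not isomorphic to the torus is isomorphic to $\AF^1\times Y$ if and only if there exists $e\in\mathcal{H}$ with $-e\in\mathcal{R}(S)$. Indeed, such an $e$ forces $\langle e,\rho'\rangle=0$ for every non-distinguished ray $\rho'\in\sigma(1)$, producing a lattice splitting $\sigma=\RR_{\geq 0}v\oplus\sigma'$ with $\langle e,v\rangle=1$; the converse is a direct computation in $S=\NN e\oplus S'$. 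Granting this, let $\alpha\in\mathcal{R}(S)$ with distinguished ray $\rho$ and let $m\in S_l$ with $\langle m,\rho\rangle>0$. Choose a maximal-length decomposition $m=h_1+\cdots+h_r$ into elements of $\mathcal{H}$, so $r>l$, and pick an index $i$ with $\langle h_i,\rho\rangle>0$. Then $h_i+\alpha\in S$ by the Demazure root condition combined with the saturation of $S$, and $h_i+\alpha\neq 0$, since otherwise $-\alpha=h_i\in\mathcal{H}$, contradicting the hypothesis that $X_\sigma$ is not isomorphic to an $\AF^1\times Y$. Writing $h_i+\alpha=h'_1+\cdots+h'_s$ with $s\geq 1$ yields a decomposition of $m+\alpha$ of length $r+s-1\geq r>l$, so $m+\alpha\in S_l$ as required.

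The main obstacle I anticipate is rigorously justifying that $\Aut(\GM^k\times X_\sigma)$ is generated, in the appropriate ind-group sense, by the torus, the root subgroups, and the discrete semigroup automorphisms described above, so that checking surjectivity on these suffices. For the normal variety this follows from the weight decomposition of the Lie algebra of $\Aut$ under the torus together with the classification of homogeneous locally nilpotent derivations in \cite[Theorem~3.11]{DL24}; the remaining subtlety is promoting the resulting abstract group isomorphism to an isomorphism of ind-groups via $\pi$, which requires tracking that the closed algebraic subvarieties defining each ind-structure correspond to each other through the normalization map.
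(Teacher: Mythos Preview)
Your strategy diverges from the paper's, and the obstacle you flag at the end is a genuine gap that your proposed fix does not close. Passing from the weight decomposition of $\Lie\Aut(\GM^k\times X_\sigma)$ under the torus to generation of the ind-group by the torus, the root subgroups, and the semigroup symmetries is not a formality: for infinite-dimensional ind-groups the subgroup generated by all algebraic one-parameter subgroups can be strictly smaller than the full group, and the classification of homogeneous locally nilpotent derivations in \cite{DL24} says nothing about whether their exponentials, together with the torus and $\Aut(\ZZ^k\oplus S)$, generate $\Aut(\GM^k\times X_\sigma)$. No such generation theorem is proved or cited in the paper, and without it, verifying that each of your three families descends does not establish surjectivity of $\iota$.

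The paper avoids generation entirely by arguing uniformly for an arbitrary $\varphi\in\Aut(\GM^k\times X_\sigma)$. It first proves (\cref{fix-point-is-fixed}), via a comparison of local rings, that the hypotheses on $X_\sigma$ force every automorphism of $X_\sigma$ to fix the torus fixed point $x_0$; consequently every $\varphi$ preserves $\GM^k\times\{x_0\}$ and hence the ideal
\[
I=I(\GM^k\times\{x_0\})=\bigoplus_{a\in\ZZ^k,\ m\in S\setminus\{0\}}\KK\,\chi^a\otimes\chi^m.
\]
Now $\varphi^*$ preserves the units $\KK^*\cdot(\chi^{\ZZ^k}\otimes 1)$, and for $m'\in S_l\setminus\{0\}$ one writes $m'=m'_1+\cdots+m'_r$ in Hilbert-basis elements with $r>l$ and factors $\chi^a\otimes\chi^{m'}$ accordingly; since each factor lies in $I$ and $\varphi^*$ preserves $I$, every monomial appearing in $\varphi^*(\chi^a\otimes\chi^{m'})$ has second component with a Hilbert-basis decomposition of length at least $r>l$, hence lies in $\KK[\GM^k]\otimes_\KK\KK[S_l]$. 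Thus $\varphi^*$ preserves this subalgebra for every $\varphi$ at once. The ind-group statement is then immediate because normalization already gives a closed immersion of ind-groups by \cite[Proposition~12.1.1]{furter2018geometry}, now shown to be bijective.

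Your combinatorial lemma $\mathcal{R}(S)=\mathcal{R}(S_l)$ and its proof via the characterization ``$X_\sigma\simeq\AF^1\times Y$ if and only if some $e\in\mathcal{H}$ satisfies $-e\in\mathcal{R}(S)$'' are correct; they are, however, not used in the paper's argument and in fact follow a posteriori once the isomorphism of automorphism groups is established.
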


\begin{proof}
Since the normalization of $\GM^k \times X_{S_l} $ is $\GM^k\times X_\sigma$, each automorphism of $\GM^k\times X_{S_l}$ lifts to an automorphism of $\GM^k\times X_\sigma$. Moreover, by \cite[Proposition 12.1.1]{furter2018geometry}
the natural embedding $\psi \colon \Aut(\GM^k\times X_{S_l}) \to \Aut(\GM^k\times X_\sigma)$ is  a closed immersion of ind-groups. 
To prove the proposition, it suffices to show that $\psi$ is surjective or equivalently that any automorphism of $\GM^k\times X_\sigma$ induces an automorphism of $\GM^k\times X_{S_l}$.

The toric variety $\GM^k\times X_\sigma $ is given by the cone $(0,\sigma)$ in the free abelian group of characters
$M' = \mathbb{Z}^k\oplus M$. Let $T=\spec\KK[M']$.
The $T$-action on $\GM^k\times X_\sigma$ induces an action on $X_\sigma$ and $T$ acts on $X_\sigma$ with an open orbit. The variety $X_\sigma$ contains a unique $T$-fixed point. Moreover, the fixed point $x_0$ is singular since $X$ itself is singular by the hypothesis of not being isomorphic to $\mathbb{A}^1 \times Y$ nor to the algebraic torus. Furthermore, by \cref{fix-point-is-fixed},
an automorphism $\varphi \in \Aut(\GM^k\times X_\sigma)$ fixes $\GM^k\times \{ x_0 \}$. Note that the ideal
\begin{align*} 
 I(\GM^k\times \{x_0\} ) \subset \KK[\GM^k] \otimes_\KK \KK[\sigma^\vee \cap M]
 = \left(\bigoplus_{m\in \ZZ^k} \KK \chi^m \right)\otimes_\KK\left(\bigoplus_{m'\in S} \KK \chi^{m'} \right)
\end{align*}
is generated by $\left\{ \chi^m\otimes_\KK \chi^{m'} \mid m\in \mathbb{Z}^k \mbox{ and } m' \in \mathcal{H}\right\}$, where $\mathcal{H}$ is the Hilbert basis of $S$. 
Therefore, $\varphi^*(\chi^m\otimes_\KK \chi^{m'}) \in I(\GM^k \times \{ x_0 \})$ for all $m \in \mathbb{Z}^k$ and all $m' \in \mathcal{H}$.

We claim that an automorphism $\varphi^* \in \Aut(\KK[\GM^k] \otimes_\KK \KK[S])$ preserves $$\KK[\GM^k] \otimes_\KK \KK[S_l] = \left(\bigoplus_{m\in \ZZ^k} \KK \chi^m \right) \otimes_\KK \left(\bigoplus_{m' \in S_l} \KK \chi^{m'}\right)\qquad \mbox{for all } l\in \ZZ_{\geq 0}\,.$$ 
To prove this, it is enough to show that $\varphi^*(\chi^{m} \otimes \chi^{m'}) \in \KK[\GM^k] \otimes_\KK \KK[S_l]$ for any $m \in \mathbb{Z}^k, m' \in S_l$. If $m' \in S_l$, then $m'$ can be written as $m'_1 + \ldots + m'_r$, where $m'_i \in \mathcal{H}$ and $r \ge l$. Hence, 
\[\chi^m \otimes \chi^{m'} = (\chi^m \otimes \chi^{m'_1})(1 \otimes \chi^{m'_2}) \cdots (1 \otimes \chi^{m'_r})
\]
and 
\[\varphi^*(\chi^m \otimes \chi^{m'}) = \varphi^*(\chi^m \otimes \chi^{m'_1}) \varphi^*(1 \otimes \chi^{m'_2}) \cdots \varphi^*(1 \otimes \chi^{m'_r}).
\]
Since $\varphi^*$ preserves $I(\GM^k \times \{ x_0 \})$ by \cref{fix-point-is-fixed}, we have 
\begin{align*}
\varphi^*(\chi^{m} \otimes \chi^{m'_i}) = \sum_{j} \alpha_{ij}\chi^{m_j} \otimes \chi^{m'_{ij}} \in \left(\bigoplus_{m\in \ZZ^k} \KK \chi^m \right) \otimes_\KK \left(\bigoplus_{m' \in S\setminus \{0\}} \KK \chi^{m'}\right)\,. 
\end{align*}
Furthermore, since every element in $S\setminus\{0\}$ is a sum of a positive number of elements in $\mathcal{H}$, we have that each summand in the product $\varphi^*(\chi^m \otimes \chi^{m'_1}) \varphi^*(1 \otimes \chi^{m'_2}) \cdots \varphi^*(1 \otimes \chi^{m'_r})$ 
is a product of at least $r$ monomials $\chi^a\otimes\chi^b$ with $a\in \ZZ^k$ and $b\in \mathcal{H}$. Since $r \ge l$, we conclude that
\[
\varphi^*(\chi^m \otimes \chi^{m'_1}) \varphi^*(1 \otimes \chi^{m'_2}) \cdots \varphi^*(1 \otimes \chi^{m'_r}) \in \KK[\GM^k]\otimes_\KK \KK[S_l].
\]
Therefore, $\varphi^* \in \Aut(\KK[\GM^k] \otimes_\KK \KK[S])$
 induces an automorphism of $\KK[\GM^k] \otimes_\KK \KK[S_l]$ which proves the statement.
\end{proof}

\cref{isomofaut} shows, in particular, that $\Aut(X_\sigma) \simeq \Aut(X_{S_l})$ whenever $X_\sigma$ is a non-degenerate normal affine toric variety that is not isomorphic to $\AF^1\times Y$, $S=\sigma^\vee\cap M$ and $S_l=S\setminus\widehat{\mathcal{H}}_l$, for all $l\in \ZZ_{\geq 0}$. However, the next example shows that in general it is not true that if $X_{S}$ is a non-normal non-degenerate affine toric variety with normalization $X_\sigma$ such that $\Aut(X_{S})\simeq\Aut(X_{\sigma})$, then $S=S_l$, for some $l$.

\begin{example} 
Let $\widetilde{S}$ be the saturated affine semigroup generated by the set $\{(1,0),(1,1),(3,4)\}$ in $\mathbb{Z}^2$, see Figure 3. Let also $S=\widetilde{S}\setminus\{(1,0),(1,1),(3,2),(3,3),(3,4),(5,6)\}$, see Figure 4. The following images show that $S$ and $\widetilde{S}$ have the same Demazure roots and \cite[Lemma~4.9]{DL24} shows that having the same set of Demazure roots implies that $\Aut(X_S)=\Aut(X_{\widetilde{S}})$. In the figures, the black dots represent the elements of the semigroups $\widetilde{S}$ and $S$ respectively and the green dots represent the Demazure roots of both affine semigroups.
$$
\begin{array}{cc}
\begin{picture}(130,90)
\definecolor{gray1}{gray}{0.7}
\definecolor{gray2}{gray}{0.85}
\definecolor{green}{RGB}{0,124,0}

\textcolor{gray2}{\put(12,15.2){\vector(1,0){80}}}
\textcolor{gray2}{\put(20,5){\vector(0,1){80}}}

\put(60,35){{\circle*{3}}}
\put(70,35){\circle*{3}} 
\put(80,35){\circle*{3}}
\put(90,35){\circle*{3}}

\put(10,45){\textcolor{gray1}{\circle*{3}}}
\put(20,45){\textcolor{gray1}{\circle*{3}}}
\put(30,45){\textcolor{gray1}{\circle*{3}}}
\put(40,45){\textcolor{green}{\circle*{3}}}
\put(50,45){\circle*{3}}
\put(60,45){\circle*{3}}
\put(70,45){\circle*{3}} 
\put(80,45){\circle*{3}} 
\put(90,45){\circle*{3}}

\put(10,55){\textcolor{gray1}{\circle*{3}}}
\put(20,55){\textcolor{gray1}{\circle*{3}}}
\put(30,55){\textcolor{gray1}{\circle*{3}}}
\put(40,55){\textcolor{gray1}{\circle*{3}}}
\put(50,55){\circle*{3}}
\put(60,55){\circle*{3}}
\put(70,55){\circle*{3}} 
\put(80,55){\circle*{3}}
\put(90,55){\circle*{3}}
\put(10,5){\textcolor{gray1}{\circle*{3}}}
\put(20,5){\textcolor{green}{\circle*{3}}}
\put(30,5){\textcolor{green}{\circle*{3}}} 
\put(40,5){\textcolor{green}{\circle*{3}}} 
\put(50,5){\textcolor{green}{\circle*{3}}} 
\put(60,5){\textcolor{green}{\circle*{3}}} 
\put(70,5){\textcolor{green}{\circle*{3}}} 
\put(80,5){\textcolor{green}{\circle*{3}}} 
\put(90,5){\textcolor{green}{\circle*{3}}}

\put(10,15){\textcolor{gray1}{\circle*{3}}}
\put(20,15){\circle*{3}} 
\put(30,15){\circle*{3}} 
\put(40,15){\circle*{3}} 
\put(50,15){\circle*{3}}
\put(60,15){\circle*{3}}
\put(70,15){\circle*{3}}
\put(80,15){\circle*{3}}
\put(90,15){\circle*{3}}

\put(10,25){\textcolor{gray1}{\circle*{3}}}
\put(20,25){\textcolor{gray1}{\circle*{3}}}
\put(30,25){\circle*{3}} 
\put(40,25){\circle*{3}} 
\put(50,25){\circle*{3}}
\put(60,25){\circle*{3}}
\put(70,25){{\circle*{3}}} 
\put(80,25){\circle*{3}}
\put(90,25){\circle*{3}}

\put(10,35){\textcolor{gray1}{\circle*{3}}}
\put(20,35){\textcolor{gray1}{\circle*{3}}}
\put(30,35){\textcolor{gray1}{\circle*{3}}}
\put(40,35){\circle*{3}} 
\put(50,35){{\circle*{3}}}

\put(10,65){\textcolor{gray1}{\circle*{3}}}
\put(20,65){\textcolor{gray1}{\circle*{3}}}
\put(30,65){\textcolor{gray1}{\circle*{3}}}
\put(40,65){\textcolor{gray1}{\circle*{3}}}
\put(50,65){\textcolor{gray1}{\circle*{3}}}
\put(60,65){\circle*{3}}
\put(70,65){\circle*{3}} 
\put(80,65){\circle*{3}} 
\put(90,65){\circle*{3}}

\put(10,75){\textcolor{gray1}{\circle*{3}}}
\put(20,75){\textcolor{gray1}{\circle*{3}}}
\put(30,75){\textcolor{gray1}{\circle*{3}}}
\put(40,75){\textcolor{gray1}{\circle*{3}}}
\put(50,75){\textcolor{gray1}{\circle*{3}}}
\put(60,75){\textcolor{gray1}{\circle*{3}}}
\put(70,75){\circle*{3}} 
\put(80,75){\circle*{3}} 
\put(90,75){\circle*{3}} 

\put(10,85){\textcolor{gray1}{\circle*{3}}}
\put(20,85){\textcolor{gray1}{\circle*{3}}}
\put(30,85){\textcolor{gray1}{\circle*{3}}}
\put(40,85){\textcolor{gray1}{\circle*{3}}}
\put(50,85){\textcolor{gray1}{\circle*{3}}}
\put(60,85){\textcolor{gray1}{\circle*{3}}}
\put(70,85){\textcolor{green}{\circle*{3}}}
\put(80,85){\circle*{3}} 
\put(90,85){\circle*{3}} 

\end{picture}&\begin{picture}(130,90)
\definecolor{gray1}{gray}{0.7}
\definecolor{gray2}{gray}{0.85}
\definecolor{green}{RGB}{0,124,0}

\textcolor{gray2}{\put(12,15.2){\vector(1,0){80}}}
\textcolor{gray2}{\put(20,5){\vector(0,1){80}}}

\put(10,5){\textcolor{gray1}{\circle*{3}}}
\put(20,5){\textcolor{green}{\circle*{3}}}
\put(30,5){\textcolor{green}{\circle*{3}}} 
\put(40,5){\textcolor{green}{\circle*{3}}} 
\put(50,5){\textcolor{green}{\circle*{3}}} 
\put(60,5){\textcolor{green}{\circle*{3}}} 
\put(70,5){\textcolor{green}{\circle*{3}}} 
\put(80,5){\textcolor{green}{\circle*{3}}} 
\put(90,5){\textcolor{green}{\circle*{3}}}

\put(10,15){\textcolor{gray1}{\circle*{3}}}
\put(20,15){\circle*{3}} 
\put(30,15){\circle{3}} 
\put(40,15){\circle*{3}} 
\put(50,15){\circle*{3}}
\put(60,15){\circle*{3}}
\put(70,15){\circle*{3}}
\put(80,15){\circle*{3}}
\put(90,15){\circle*{3}}

\put(10,25){\textcolor{gray1}{\circle*{3}}}
\put(20,25){\textcolor{gray1}{\circle*{3}}}
\put(30,25){\circle{3}} 
\put(40,25){\circle*{3}} 
\put(50,25){\circle*{3}}
\put(60,25){\circle*{3}}
\put(70,25){{\circle*{3}}} 
\put(80,25){\circle*{3}}
\put(90,25){\circle*{3}}

\put(10,35){\textcolor{gray1}{\circle*{3}}}
\put(20,35){\textcolor{gray1}{\circle*{3}}}
\put(30,35){\textcolor{gray1}{\circle*{3}}}
\put(40,35){\circle*{3}} 
\put(50,35){{\circle{3}}}
\put(60,35){{\circle*{3}}}
\put(70,35){\circle*{3}} 
\put(80,35){\circle*{3}}
\put(90,35){\circle*{3}}

\put(10,45){\textcolor{gray1}{\circle*{3}}}
\put(20,45){\textcolor{gray1}{\circle*{3}}}
\put(30,45){\textcolor{gray1}{\circle*{3}}}
\put(40,45){\textcolor{green}{\circle*{3}}}
\put(50,45){\circle{3}}
\put(60,45){\circle*{3}}
\put(70,45){\circle*{3}} 
\put(80,45){\circle*{3}} 
\put(90,45){\circle*{3}}

\put(10,55){\textcolor{gray1}{\circle*{3}}}
\put(20,55){\textcolor{gray1}{\circle*{3}}}
\put(30,55){\textcolor{gray1}{\circle*{3}}}
\put(40,55){\textcolor{gray1}{\circle*{3}}}
\put(50,55){\circle{3}}
\put(60,55){\circle*{3}}
\put(70,55){\circle*{3}} 
\put(80,55){\circle*{3}}
\put(90,55){\circle*{3}}

\put(10,65){\textcolor{gray1}{\circle*{3}}}
\put(20,65){\textcolor{gray1}{\circle*{3}}}
\put(30,65){\textcolor{gray1}{\circle*{3}}}
\put(40,65){\textcolor{gray1}{\circle*{3}}}
\put(50,65){\textcolor{gray1}{\circle*{3}}}
\put(60,65){\circle*{3}}
\put(70,65){\circle*{3}} 
\put(80,65){\circle*{3}} 
\put(90,65){\circle*{3}}

\put(10,75){\textcolor{gray1}{\circle*{3}}}
\put(20,75){\textcolor{gray1}{\circle*{3}}}
\put(30,75){\textcolor{gray1}{\circle*{3}}}
\put(40,75){\textcolor{gray1}{\circle*{3}}}
\put(50,75){\textcolor{gray1}{\circle*{3}}}
\put(60,75){\textcolor{gray1}{\circle*{3}}}
\put(70,75){\circle{3}} 
\put(80,75){\circle*{3}} 
\put(90,75){\circle*{3}} 

\put(10,85){\textcolor{gray1}{\circle*{3}}}
\put(20,85){\textcolor{gray1}{\circle*{3}}}
\put(30,85){\textcolor{gray1}{\circle*{3}}}
\put(40,85){\textcolor{gray1}{\circle*{3}}}
\put(50,85){\textcolor{gray1}{\circle*{3}}}
\put(60,85){\textcolor{gray1}{\circle*{3}}}
\put(70,85){\textcolor{green}{\circle*{3}}}
\put(80,85){\circle*{3}} 
\put(90,85){\circle*{3}} 

\end{picture}\\
\text{Figure 3}&\text{Figure 4}\\
\end{array}
$$
\end{example}

\begin{remark} \label{all-S}
 Given a saturated affine semigroup $S$, it seems that finding all affine semigroups $S'$ that have $S$ as its saturation and such that $\mathcal{R}(S')=\mathcal{R}(S)$ is a hard combinatorial problem and is currently beyond our reach.
\end{remark}

The case of the algebraic torus is handled by \cref{main-torus}. We first need the following remark, borrowed from \cite{regeta2021characterizing}, proving that any variety with automorphism group isomorphic to the automorphism group of the algebraic torus $T$, admits an action of $T$.

\begin{remark}\label{remarkalgebraictorus}
 Let $T$ be the algebraic torus of dimension $n$ and let $Y$ be an affine irreducible variety of dimension $n$. Fix an isomorphism \[
 \varphi\colon \Aut(T) \to \Aut(Y).
 \]
By \cite[Lemma 2.10]{kraft2021affine}, the torus $T\subset \Aut(T)$ coincides with its centralizer in $\Aut(T)$. Hence, $\varphi(T)\subset \Aut(Y)$ coincides with its centralizer in $\Aut(Y)$ which implies that $\varphi(T) \subset \Aut(Y)$ is a closed subgroup. 
 Moreover, applying the argument as in the proof of \cite[Theorem E]{regeta2021characterizing}, we conclude that $\varphi(T)^\circ \subset \Aut(Y)$ is the algebraic torus of dimension $n$.
\end{remark}

We can now handle the case of the algebraic torus.

\begin{proposition} \label{main-torus}
Let $T$ be the algebraic torus and let $Y$ be an irreducible affine variety with $ \dim Y\leq \dim T$. If $\Aut(Y) \simeq \Aut(T)$ as a group, then $Y \simeq T$ as a variety.
\end{proposition}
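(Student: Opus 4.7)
The plan is to first realize $Y$ as an affine toric variety of dimension $n$ using \cref{remarkalgebraictorus}, and then to exploit the combinatorial structure of its defining semigroup to force $Y \simeq T$. Setting $T' := \varphi(T)^{\circ}$, \cref{remarkalgebraictorus} gives that $T'$ is an algebraic subtorus of $\Aut(Y)$ of dimension $n$ acting faithfully on $Y$; combined with $\dim Y \leq n$, this forces $\dim Y = n$ and a dense open $T'$-orbit, so $Y$ is affine toric and we write $Y = X_{S'}$ for an affine semigroup $S' \subset M' \simeq \ZZ^{n}$ that is minimally embedded. I would next show that $\varphi(T) = T'$ by proving that the centralizer of $T'$ in $\Aut(Y)$ equals $T'$: any $\psi \in \Aut(Y)$ commuting with $T'$ preserves each one-dimensional $T'$-weight space $\KK \cdot \chi^{m}$ ($m \in S'$) of $\KK[Y]$ and acts on it by a scalar $\lambda_{m}$; multiplicativity of $\psi^{*}$ forces $\lambda$ to extend to a character of $M'$, giving $\psi \in T'$. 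Since $\varphi(T)$ is abelian and contains $T'$, it lies in $C_{\Aut(Y)}(T') = T'$, hence $\varphi(T) = T'$. Consequently $\Aut(Y)/T' \simeq \Aut(T)/T \simeq \GL_{n}(\ZZ)$, which is countable.

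The countability of this quotient immediately rules out algebraic $\GA$-subgroups of $\Aut(Y)$: any such $U \simeq \GA$ satisfies $U \cap T' = \{\id\}$ (a torus contains no nontrivial unipotent subgroup), so $U$ injects into the countable group $\Aut(Y)/T'$, contradicting the uncountability of $U \simeq \KK^{+}$. Therefore $\mathcal{R}(S') = \emptyset$. If $Y$ is normal this already finishes the proof, since $S'$ is then saturated and \cref{famous-remark}(ii) shows that every proper saturated subsemigroup of $M'$ admits Demazure roots; hence $S' = M'$ and $Y \simeq T$.

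The main obstacle is the non-normal case, where $\mathcal{R}(S')$ may be empty even for $S' \subsetneq M'$ (as in the cuspidal cubic). To handle this, I would further exploit that $T' = \varphi(T)$ is normal in $\Aut(Y)$, transported from normality of $T$ in $\Aut(T) = T \rtimes \GL_{n}(\ZZ)$; the induced conjugation action of $\Aut(Y)$ on $T'$ yields an injection $\Aut(Y)/T' \hookrightarrow \GL(M') = \GL_{n}(\ZZ)$ whose image is the semigroup stabilizer $\Aut_{\mathrm{mon}}(S') := \{g \in \GL(M') \mid g(S') = S'\}$. Combining this with the abstract isomorphism $\Aut(Y)/T' \simeq \GL_{n}(\ZZ)$ and a $\tau$-semialgebraicity argument in the spirit of \cref{RvS23Theorem9.1} (adapted to the boundary case $X = T$ not literally covered by that theorem), the composition
\[
\GL_{n}(\ZZ) \;\longrightarrow\; \Aut(Y)/T' \;\hookrightarrow\; \Aut_{\mathrm{mon}}(S') \;\subset\; \GL_{n}(\ZZ)
\]
becomes an honest isomorphism onto its target, forcing $\Aut_{\mathrm{mon}}(S') = \GL_{n}(\ZZ)$. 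A direct combinatorial check then concludes the proof: $\GL_{n}(\ZZ)$ acts on $M' \setminus \{0\}$ with orbits parametrized by the content of lattice vectors, and any $\GL_{n}(\ZZ)$-stable subsemigroup of $M'$ that is minimally embedded must contain elements of content $1$ and therefore all of $M'$; hence $S' = M'$ and $Y \simeq T$.
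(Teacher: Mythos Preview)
Your overall architecture matches the paper's: realize $Y$ as an affine toric variety via \cref{remarkalgebraictorus}, show $\varphi(T)=\varphi(T)^\circ$ via the centralizer argument, deduce rigidity from countability of $\Aut(Y)/\varphi(T)$, and then analyze the quotient $\Aut(Y)/\varphi(T)\simeq \GL_n(\ZZ)$ acting on the character lattice. The paper also reaches the decomposition $\Aut(Y)=\varphi(T)\rtimes G$ with $G\subset\GL(M_Y)$ the stabilizer of $S$ (citing \cite{BoGa21}), exactly your $\Aut_{\mathrm{mon}}(S')$.

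The genuine gap is in your passage from ``$\Aut_{\mathrm{mon}}(S')$ is abstractly isomorphic to $\GL_n(\ZZ)$'' to ``$\Aut_{\mathrm{mon}}(S')=\GL_n(\ZZ)$ as a subgroup of $\GL(M')$''. You justify this by a ``$\tau$-semialgebraicity argument in the spirit of \cref{RvS23Theorem9.1}'', but that theorem is stated only for $X$ \emph{not} isomorphic to the torus, and its proof uses root subgroups in an essential way---precisely the objects that do not exist when $X=T$. As written you have only produced an injective group homomorphism $\GL_n(\ZZ)\hookrightarrow\GL_n(\ZZ)$, and concluding surjectivity from this alone is not automatic; it is essentially a co-Hopfian statement for $\GL_n(\ZZ)$ that you have not supplied.

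The paper bypasses this by proving a much weaker and more targeted claim: using Margulis super-rigidity, it shows that the abstract isomorphism $\GL(M)\to G\subset\GL(M_Y)$ carries the center $\{\pm\id_M\}$ into the center $\{\pm\id_{M_Y}\}$, hence $-\id_{M_Y}\in G$. This single element already forces $S$ to be closed under negation, so $S$ is a group and $S=M_Y$. This avoids both your unjustified surjectivity step and your final combinatorial orbit argument. If you want to salvage your route, the missing ingredient is precisely a rigidity theorem for homomorphisms $\GL_n(\ZZ)\to\GL_n(\ZZ)$ (or at least for where they send the center), and Margulis super-rigidity is the standard tool for this.
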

\begin{proof}
Let $T$ be the algebraic torus of dimension $n$. Without loss of generality we may assume that $n\ge 1$.
Fix an isomorphism $\varphi\colon \Aut(T) \to \Aut(Y)$. 
It is known that 
\begin{align} \label{eq:aut-torus}
 \Aut(T) = T\rtimes \GL(M)\,,
\end{align}
where $M\simeq \ZZ^n$ is the free abelian group of characters of $T$ and so $\GL(M)\simeq \GL_n(\mathbb{Z})$ is the group of lattice automorphisms of $M$. By \cref{remarkalgebraictorus}, the image of $T$ in $\Aut(Y)$ is a closed subgroup and $\varphi(T)^\circ \subset \Aut(Y)$ is the algebraic torus of dimension $n$, i.e., $Y$ is an affine toric variety. Moreover, since $\varphi(T)^\circ \subset \Aut(Y)$ is a maximal subtorus of dimension $n=\dim Y$, we have that $\varphi(T)^\circ \subset \Aut(Y)$ coincides with its centralizer by \cite[Lemma 2.10]{kraft2021affine}. Hence, we obtain $\varphi^{-1}(\varphi(T)^\circ) = T$ which implies that $\varphi(T) = \varphi(T)^\circ$ and so $\varphi(T)$ is the acting torus of $Y$.

By \eqref{eq:aut-torus}, we have that $\Aut(Y)$ is a countable extension of the algebraic torus $\varphi(T)$. Hence, the group $\Aut(Y)$ does not contain any unipotent algebraic subgroup since $\GA=(\KK,+)$ is uncountable. Such $Y$ is called rigid. We conclude that $Y$ is a not necessarily normal rigid affine toric variety with an action of $n$-dimensional torus $\varphi(T)$. Denote by $M_Y$ the free abelian group of characters of $\varphi(T)$ and
let now $S\subset M_Y$ be the affine semigroup of $Y$. 
 
By \cite[Theorem~3]{BoGa21} we have that 
\[
\Aut(Y)=\varphi(T) \rtimes G,
\]
where $G$ is the subgroup of $\GL(M_Y) \simeq \GL_n(\mathbb{Z})$ fixing $S$. Remark that $G$ is itself isomorphic to $\GL_n(\mathbb{Z})\simeq\GL(M)$ by \eqref{eq:aut-torus}.  
\begin{align} \label{eq:claim}
    \mbox{We claim that }\varphi\left(-\id\in \GL(M)\right)=-\id\in \GL(M_Y)\,.
\end{align}

Hence, $-\id\in G\subset \GL(M_Y)$ and so the semigroup $S$ is a group since every element has an inverse. Finally, since the action of $\varphi(T)$ on $Y$ is faithful, we have that the group generated by $S$ equals $M_Y$ and so $S=M_Y \simeq \ZZ^{n}$ which proves the proposition.

\medskip

To prove the claim in \eqref{eq:claim}, we
fix the natural embedding
$$
\psi\colon  \GL(M_Y) \subset \GL(M_Y \otimes_{\ZZ} \KK)
$$
and consider the injective homomorphism
$$
\psi\circ \varphi\colon \GL(M) \simeq \GL_n(\ZZ) \to \GL(M_Y \otimes_{\ZZ} \KK)\,.
$$
By Margulis super-rigidity theorem \cite{M91}, see also \cite[Theorem~D]{PSA}, there is a finite index subgroup $K$ of $\SL(M)$, which is a subgroup of index two in $\GL(M)$ and a rational representation 
\[
F\colon \SL(M \otimes_{\ZZ}\KK) \to \GL(M_Y \otimes_{\ZZ} \KK) \quad \mbox{with}\quad F|_K = \psi\circ \varphi  |_K\,.
\]
This implies that the image of $\SL(M)$ by $\psi\circ \varphi$ is dense in $\SL(M_Y \otimes_{\ZZ} \KK)$. 
Therefore, $\psi\circ \varphi$
maps the center of $\GL(M)$, which is $\{ \id_M,-\id_M \}\subset \GL(M)$, into the subgroup of $\GL(M_Y \otimes_{\ZZ} \KK)$ which commutes with $\SL(M_Y\otimes_{\ZZ} \KK)$. This last group is the center of $\GL(M_Y \otimes_{\ZZ} \KK)$ which we denote by $Z(\GL(M_Y \otimes_{\ZZ} \KK))$. 
Hence, $\varphi$ maps the center of $\GL(M)$ to
\[
Z(\GL(M_Y \otimes_{\ZZ} \KK)) \cap \GL(M_Y) = \{  \id_{M_Y},-\id_{M_Y} \}.
\]
Therefore, $\varphi$ maps the center of 
 $\GL(M)$ to the
center of  $\GL(M_Y)$ which proves the claim since $\varphi$ is injective.   
\end{proof}

We are ready to prove the main result of this paper.

\begin{proof}[Proof of \cref{maintheorem}]

Statement \eqref{main1} follows directly from \cref{main-torus} and Statement \eqref{main3} follows directly from \cref{isomofaut}. To prove Statement \eqref{main2} fix an isomorphism
 \[
\varphi\colon \Aut(X) \to \Aut(Y).
\]
Since $X$ is a normal affine toric variety, $X = \KK[\sigma^\vee\cap M]$ for some polyhedral cone $\sigma\subset M_\QQ$. Assume $T\subset \Aut(X)$ is a maximal subtorus of dimension $\dim X$.

By \cref{RvS23TheoremC1},  the image of $T$ inside $\Aut(Y)$ is a maximal subtorus of dimension $\dim T$ and the variety $Y$ is an $n$-dimensional affine toric variety. Hence, $Y = \spec \KK[S]$ for some affine semigroup $S\subset M$. 
By \cref{RvS23Theorem9.1}, 
there exists a field automorphism $\tau$ of
$\KK$ such that the isomorphism
$$\nu\colon M_T\to M_{\varphi(T)}\quad\mbox{given by}\quad \chi\mapsto \chi\circ \left(\varphi^{-1}\right)|_{\varphi(T)}\circ \tau_{\varphi(T)}$$
maps $\mathcal{R}_T(X)$ to $\mathcal{R}_{\varphi(T)}(Y)$. Moreover, by \cref{Th1.7-toric}, if we identify $T$ and $\varphi(T)$ via the isomorphism induced by $\nu$, we have that $\mathcal{R}(\sigma^\vee\cap M) = \mathcal{R}(S)$. By \cref{normalization}, we conclude that the normalization of $Y$ is isomorphic to $X$. Finally, by \cref{A1timesZisdeterminedbyroots}, we conclude $Y\simeq X$ which proves part \eqref{main2} and thus concludes the proof of \cref{maintheorem}.
\end{proof}

\bibliographystyle{alpha}
\bibliography{ref}

\end{document}